\DeclareMathOperator{\diam}{diam}
\DeclareMathOperator{\dist}{dist}
\newcommand{\Lm}{\mathcal{L}} %Lebesgue measure
\newcommand{\Hm}{\mathcal{H}} %Hausdorff measure
\newcommand{\N}{\mathbb{N}} %natural numbers
\newcommand{\R}{\mathbb{R}} %real numbers
\newcommand{\1}{\mathds{1}} %characteristic function
\newcommand{\eps}{\varepsilon}
\newcommand{\loc}{\mathrm{loc}}
\newcommand{\Per}{\mathrm{Per}}
\newcommand{\GSBV}{\mathrm{GSBV}}
\newcommand{\GSBD}{\mathrm{GSBD}}
\newcommand{\SBV}{\mathrm{SBV}}
\newcommand{\SBD}{\mathrm{SBD}}
\newcommand{\BV}{\mathrm{BV}}
\newtheorem{prop}{Proposition}
\newtheorem{lemma}{Lemma}
\newtheorem{theorem}{Theorem}
\newtheorem{rem}{Remark}
\title[Anisotrpoic Poincar\'e Inequality in $GSBV^p$ and anisotropic Mumford-Shah]{An anisotropic Poincar\'e Inequality in $GSBV^p$ and the limit of strongly anisotropic Mumford-Shah functionals}
\date{}
\begin{document}

\author[J. Ginster]{Janusz Ginster}
\address{Janusz Ginster\\ Institut f\"ur  Mathematik \\ Humboldt-Universit\"at zu Berlin \\ Unter den Linden 6 \\ 10099 Berlin, Germany}
\email{janusz.ginster@math.hu-berlin.de}

\author[P. Gladbach]{Peter Gladbach}
\address{Peter Gladbach\\ Institut f\"ur Angewandte Mathematik \\ Rheinische Friedrich-Wilhelms-Universit\"at Bonn \\ Endenicher Allee 60 \\ 53115 Bonn, Germany}
\email{gladbach@iam.uni-bonn.de}

\subjclass[2010]{26D10; 49J45}
\keywords{Functional inequalities, Sobolev spaces, Functions of bounded variation, anisotropic Mumford-Shah}

\begin{abstract}
We show that functions in $\GSBV^p$ in three-dimensional space with small variation in $2$ of $3$ directions are close to a function of one variable outside an exceptional set.
Bounds on the  volume and the perimeter in these two directions of the exceptional sets are provided.
As a key tool we prove an approximation result for such functions by functions in $W^{1,p}$.
For this we present a two-dimensional countable ball construction that allows to carefully remove the jumps of the function.
As a direct application, we show $\Gamma$-convergence of an anisotropic three-dimensional Mumford-Shah model to a one-dimensional model.

\end{abstract}

\maketitle

\tableofcontents
\section{Introduction}
Poincar\'e inequalities in their various forms are a standard tool in the study of PDEs or variational problems.
In particular, in the Sobolev space $W^{1,p}(\Omega)$ of a connected open bounded Lipschitz domain it holds
\[
 \min_{ a \in \R} \int_{\Omega} \left|u - a\right|^p \, dx  \leq C \int_{\Omega} |\nabla u|^p \, dx.
\]
By density this can be extended to the space of functions of bounded variation, $\BV(\Omega)$, where the right hand side is replaced by $|Du|(\Omega)$, see \cite{AmFuPa}.
Clearly, in $\BV$ one cannot expect that the left hand side can be bounded in terms of the absolutely continuous part of $Du$, $\nabla u$, alone. 
However, in \cite{Leaci} De Giorgi et al. show that for functions $u \in \SBV^p(\Omega)$ whose jump set is not too large there exists an exceptional set $\omega \subset \Omega$ such that for $1\leq p < n$
\begin{equation}\label{eq: poincare leaci}
\left(\min_{a\in\R} \int_{\Omega \setminus \omega} |u-a|^{p^*} \, dx \right)^{1/p^*} \leq C \left(\int_{\Omega} |\nabla u|^p \, dx \right)^{1/p}
\end{equation}
and $\mathcal{L}^n(\omega) \leq C \Hm^{n-1}(J_u)^{n/(n-1)}$.
In \cite{Friedrich} a variant for functions in $\SBV$ whose jump set is not necessarily small is shown: In this setting there exists a Cacciopoli partition whose total boundary can be estimated by the length of the jump set of $u$ and on each set in the Cacciopoli partition the difference between $u$ and a constant can be estimated by $\nabla u$.

In the same spirit as \eqref{eq: poincare leaci}, in \cite{CaChSc} Carriero et al. show the more sophisticated Korn-Poincar\'e inequalities in the space $\GSBD^p$ (see also \cite{ChCoFr}) and even provide bounds on the perimeter of the exceptional set $\omega$.
The strategy of the proof is to find for $u \in \GSBD^p$ a function $w \in W^{1,p}$ which agrees with $u$ outside an exceptional set $\omega \subseteq \Omega$ and satisfies appropriate bounds. 
Then one can apply the ordinary Korn-Poincar\'e inequality to the more regular function $w$ to obtain an estimate on $u$ outside $\omega$.
In addition, the same article provides not only a bound on $\mathcal{L}^n(\omega)$ but also on $\operatorname{Per}(\omega)$ in terms of $H^{n-1}(J_u)$.
A simple modification of this argument yields also a version of \eqref{eq: poincare leaci} with an additional bound on the perimeter of the exceptional set, see section \ref{sec: SBD}. 

In this paper we are interested in functions in $GSBV^p$ whose jump set is approximately parallel to one of the three directions. 
For such functions it is possible to estimate the $L^p$-difference to a function only depending on this one variable outside an exceptional set satisfying appropriate bounds.
More precisely, we prove the following.

\begin{prop}\label{prop: poincareni}
 Let $S \subseteq \R^2$ be open, bounded, with Lipschitz boundary and $I \subseteq \R$ an open, bounded interval. Then there exists  for $\Omega = I \times S$ a number $\eta(\Omega) > 0$ such that the following holds:
    
    For all $p \in [1,\infty)$ there exists a constant $C(p,\Omega) > 0$ such that for all functions $u \in \GSBV^p(\Omega)$ with $\int_{J_u} |\nu'|\, d\Hm^2 \leq \eta(\Omega)$ (here $\nu = (\nu_1,\nu') \in S^2$ denotes the measure theoretic normal to $J_u$) there exists a set $\omega \subseteq \R^3$ and a measurable function $a: I \to \R$ satisfying
    \[
    \int_{\Omega \setminus \omega} |u(x) - a(x_1)|^p \, dx \leq C(p,\Omega) \int_{\Omega} |\nabla' u|^p \, dx,
    \]
    where $\nabla' = (\partial_2,\partial_3)$.
    Moreover,
     \begin{equation}
    \mathcal{L}^3(\omega)\leq   C(p,\Omega) \int_{J_u} |\nu'| \, d\Hm^2
    \end{equation}
    and
    \begin{equation} 
    \sup_{\varphi \in C^0_c(I\times S), |\varphi| \leq 1} \int_{\omega} \partial_i \varphi \, dx \leq C(p,\Omega) \int_{J_u} |\nu'| \, d\Hm^2\quad\text{ for }i=2,3.
    \end{equation}
\end{prop}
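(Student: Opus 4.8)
The strategy is to reduce everything to a two-dimensional statement about the slices $u(x_1,\cdot)$, establish that statement by a ball construction, and then re-assemble by slicing.

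\emph{Reduction to slices.} For $\Lm^1$-a.e.\ $c\in I$ the slice $u_c:=u(c,\cdot)$ lies in $\GSBV^p(S)$ with $J_{u_c}=\{x'\in S:(c,x')\in J_u\}$ (up to $\Hm^1$-null sets), and the coarea formula for the $2$-rectifiable set $J_u$ applied to $x\mapsto x_1$ gives $\int_{J_u}|\nu'|\,d\Hm^2=\int_I\Hm^1(J_{u_c})\,dc$, since the coarea factor at $x\in J_u$ equals $|P_{\nu(x)^\perp}e_1|=\sqrt{1-\nu_1^2}=|\nu'|$. Hence it suffices to produce, $\Lm^1$-measurably in $c$, a set $\omega_c\subseteq\R^2$ and $w_c\in W^{1,p}(S)$ with: (i) $w_c=u_c$ on $S\setminus\omega_c$; (ii) $\|\nabla w_c\|_{L^p(S)}\le C(\Omega)\|\nabla'u_c\|_{L^p(S)}$; (iii) $\Lm^2(\omega_c)\le C(\Omega)\Hm^1(J_{u_c})$; (iv) $|D_i\1_{\omega_c}|(S)\le C(\Omega)\Hm^1(J_{u_c})$ for $i=2,3$. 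Indeed, set $\omega:=\bigcup_{c\in I}\{c\}\times\omega_c$ and $a(c):=\fint_Sw_c\,dx'$. The Poincar\'e--Wirtinger inequality on the connected Lipschitz domain $S$ applied to $w_c$, together with (i), bounds $\int_{S\setminus\omega_c}|u_c-a(c)|^p$ by $C\|\nabla w_c\|_{L^p(S)}^p$, which with (ii) and integration in $c$ yields the $L^p$-estimate; Fubini and (iii) give $\Lm^3(\omega)\le C(\Omega)\int_{J_u}|\nu'|\,d\Hm^2$; and the slicing formula $|D_i\1_\omega|(\Omega)=\int_I|D_i\1_{\omega_c}|(S)\,dc$ with (iv) gives the last estimate.

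\emph{The ball construction.} If $\Hm^1(J_{u_c})>1$ we take $\omega_c=S$ (then (i),(iv) are trivial and (iii) holds as $\Lm^2(S)\le\Lm^2(S)\Hm^1(J_{u_c})$). Otherwise first extend $u_c$ across $\partial S$ to $\bar u_c\in\GSBV^p(\R^2)$ of bounded support with $\Hm^1(J_{\bar u_c})\le C(S)\Hm^1(J_{u_c})$ and $\|\nabla\bar u_c\|_{L^p(\R^2)}\le C(S)\|\nabla'u_c\|_{L^p(S)}$ (possible since $S$ is Lipschitz, e.g.\ by bi-Lipschitz reflection), removing boundary effects. By definition of $\Hm^1$, cover $J_{\bar u_c}$ by countably many balls of radii $\rho_i$ with $\sum_i\rho_i\le 2\Hm^1(J_{\bar u_c})$, then iterate: whenever two current balls have centres within $\tfrac1{10}\max(\text{their radii})$, replace them by the smallest enclosing ball of their union. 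Each merge strictly decreases $\sum(\text{radii})$ by a definite fraction of the larger radius involved, so only countably many occur and the procedure converges to pairwise disjoint closed balls $\{\bar B(x_j,r_j)\}_j$ with $J_{\bar u_c}\subseteq\bigcup_j\bar B(x_j,r_j)$, $\sum_jr_j\le 2\Hm^1(J_{\bar u_c})$, and $\dist(\bar B(x_j,r_j),\bar B(x_k,r_k))\ge\tfrac1{10}\max(r_j,r_k)$ for $j\neq k$. The last property is the crucial output: it makes the concentric annuli $A_j:=B(x_j,\tfrac{31}{30}r_j)\setminus B(x_j,r_j)$ pairwise disjoint, disjoint from all the balls, and hence disjoint from $J_{\bar u_c}$.

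\emph{Cutting out the jumps.} On each $A_j$ (where $\bar u_c$ is Sobolev) pick, by a mean value argument in the radial variable, $s_j\in(r_j,\tfrac{31}{30}r_j)$ with $\int_{\partial B(x_j,s_j)}|\nabla\bar u_c|^p\,d\Hm^1\le\frac{C}{r_j}\int_{A_j}|\nabla\bar u_c|^p\,dx$, and on $B(x_j,s_j)$ replace $\bar u_c$ by $\tilde w_j(y):=\overline g_j+\varphi\big(|y-x_j|/s_j\big)(g_j(y)-\overline g_j)$, where $g_j$ is the jump-free trace of $\bar u_c$ on $\partial B(x_j,s_j)$, $\overline g_j$ its mean, and $\varphi$ a fixed cut-off with $\varphi\equiv0$ on $[0,\tfrac12]$, $\varphi(1)=1$. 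Poincar\'e on the circle gives $\int_{B(x_j,s_j)}|\nabla\tilde w_j|^p\,dx\le Cs_j\int_{\partial B(x_j,s_j)}|\nabla\bar u_c|^p\,d\Hm^1\le C\int_{A_j}|\nabla\bar u_c|^p\,dx$. Since the $B(x_j,s_j)$ are pairwise disjoint and traces match across each $\partial B(x_j,s_j)$, the function $\bar w_c$ equal to $\tilde w_j$ on $B(x_j,s_j)$ and to $\bar u_c$ elsewhere lies in $W^{1,p}(\R^2)$ (note $J_{\bar u_c}\subseteq\bigcup_j B(x_j,s_j)$). Taking $\omega_c:=\bigcup_jB(x_j,s_j)$ and $w_c:=\bar w_c|_S$: (i) is clear; (ii) holds since $\|\nabla w_c\|_{L^p(S)}^p\le\|\nabla\bar u_c\|_{L^p(\R^2)}^p+C\sum_j\int_{A_j}|\nabla\bar u_c|^p\le(1+C)\|\nabla\bar u_c\|_{L^p(\R^2)}^p$ (the $A_j$ disjoint); (iii) follows from $\Lm^2(\omega_c)\le\pi\big(\sum_js_j\big)^2\le C\Hm^1(J_{u_c})^2\le C\Hm^1(J_{u_c})$ (recall $\Hm^1(J_{u_c})\le1$); and (iv) from $|D_i\1_{\omega_c}|(S)\le\sum_j\int_{\partial B(x_j,s_j)}|\nu_i|\,d\Hm^1\le C\sum_js_j\le C\Hm^1(J_{u_c})$.

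\emph{Main difficulty.} Everything besides the ball construction is routine bookkeeping. The delicate point, and the heart of the paper, is the countable ball construction itself: verifying that the a priori infinite sequence of merges converges to a countable family that still covers $J_{\bar u_c}$, keeps $\sum_jr_j$ controlled, and retains the uniform separation $\dist(\bar B(x_j,r_j),\bar B(x_k,r_k))\ge\tfrac1{10}\max(r_j,r_k)$ — this separation is exactly what provides the $J_u$-free collars $A_j$, without which the trace estimates inside the balls cannot close. The remaining technicalities are the $\Lm^1$-measurable dependence of the construction on $c$, and the role of the hypothesis $\int_{J_u}|\nu'|\le\eta(\Omega)$, which keeps the construction in its favourable regime (bounded total radius, all objects confined to a fixed compact set, so that the extension past $\partial S$ and the whole procedure run uniformly over the slices).
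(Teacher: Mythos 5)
Your slice-by-slice reduction runs into precisely the obstruction that this proposition is designed to overcome, and which you defer as a ``remaining technicality'': measurability in $x_1$. On each slice your construction makes countably many non-canonical choices (the extension of $u_c$ past $\partial S$, the Vitali-type covering of $J_{\bar u_c}$, the order of merges, the radii $s_j$ chosen by a mean-value argument), and there is no reason why the resulting family $c\mapsto(\omega_c,w_c)$ admits a selection for which $\omega=\bigcup_{c}\{c\}\times\omega_c$ is $\Lm^3$-measurable and $a(c)=\fint_S w_c\,dx'$ is $\Lm^1$-measurable; $\Lm^2$-measurability of every slice does not imply $\Lm^3$-measurability of the union. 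Without this, neither the $L^p$-estimate nor the volume and perimeter bounds even make sense. The paper's proof of this proposition goes through the three-dimensional Theorem \ref{theorem: Sobolev approximation 3d}, whose entire point is to run the two-dimensional ball construction \emph{simultaneously} in all slices, starting from a single countable family of cylinders covering the full jump set $J_U$, and to obtain measurability of the exceptional set $E_t$ and of $w$ by monotone limits of explicitly cylindrical sets; only then is $a(x_1)=\fint_S w(x_1,x')\,dx'$ well defined and Poincar\'e applied per slice. To rescue your route you would need a genuine measurable-selection argument (or a construction canonical in $c$), which you do not provide.

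There is also a gap inside your two-dimensional ball construction itself. Your merging criterion (``centres within $\tfrac1{10}\max$ of the radii'') does not produce the claimed output separation $\dist(\bar B(x_j,r_j),\bar B(x_k,r_k))\ge\tfrac1{10}\max(r_j,r_k)$: two balls can overlap heavily while their centres are far apart, so no merge is triggered and the $J_u$-free collars $A_j$ need not exist. If instead you merge whenever the gap between two balls is below $\tfrac1{10}\max(r_j,r_k)$, then the smallest enclosing ball has radius up to $r_j+r_k+\tfrac1{20}\max(r_j,r_k)$, so each merge can \emph{increase} the total radius by a fixed factor; with countably many balls the iteration need neither converge nor preserve the bound $\sum_j r_j\le 2\Hm^1(J_{\bar u_c})$. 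This is exactly why the paper's Lemmas \ref{lemma: finite balls} and \ref{lemma: balls} merge only touching balls (so the radius sum is conserved at merges), pay for the needed room through the controlled growth factor $e^t$, and find jump-free circles with small Dirichlet energy not via separated collars but via the Fubini-type Lemma \ref{lemma: balls Fubini} and a good choice of time $t_0$. So both the covering step and the measurability step need the paper's mechanisms (or substitutes of comparable strength); as written, the proposal does not close either.
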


At first glance, it appears as if an application of inequality \eqref{eq: poincare leaci} in each slice $\{x_1\} \times S$ can be used to show the above result (without the bounds on the exceptional set).
The exceptional set $\omega$ would then be given by the union of the corresponding exceptional sets in each slice.
However, note that in order to guarantee $\mathcal{L}^3$-measurability of a set $\omega \subseteq \R^3$ it is not enough to ensure $\mathcal{L}^2$-measurability of every slice $(\{x_1\} \times S) \cap \omega$.
Hence, a simple application of inequality \eqref{eq: poincare leaci} or similar results in every slice cannot guarantee the measurability of the exceptional set $\omega$.

Instead we present a simple two-dimensional argument (see Theorem \ref{theorem: Sobolev approximation}) that allows us to remove the jumps of $u$ in a small neighborhood around the jump set. 
This two-dimensional construction is heavily inspired by the ball construction which was developed in the context of the Ginzburg-Landau functional, see \cite{Sandier,SandierSerfaty,Jerrard} (c.f.~\cite{DeLucaGarroniPonsiglione,AlicandroCicaleseDeLuca,Ginster,Gladbach} for an application in the context of dislocations).
We cover the jump set of $u$ by countably many balls and grow them according to the well-known ball construction until $u$ has on the outside of the grown balls boundary conditions that can be extended without jumps into the balls. 
This yields again an approximation outside a small set $\omega\subset \Omega$ from which the inequality \eqref{eq: poincare leaci} can be easily derived.
Moreover, this technique can be transferred to three dimensions by applying it simultaneously in each slice while at the same time guaranteeing measurability of the exceptional set. 

%In problems in dimension reduction for brittle materials from three to one dimension it is a reasonable scenario to deal with functions $y \in \SBV^p(I\times S)$, where $I$ is an interval and $S \subseteq \R^2$, which only satisfy that $\int_{J_y} |\nu_2| + |\nu_3| \, d\Hm^2$ is small, where $\nu$ is a measure theoretic normal to $J_y$. 
%In particular, it cannot be expected that $y$ is close to a constant along the $x_1$-direction.

In three dimensions we prove the following approximation result.

\begin{theorem}\label{theorem: Sobolev approximation 3d}
    Let $S \subseteq \R^2$ be open, bounded, with Lipschitz boundary and $I \subseteq \R$ an open, bounded interval. Then there exists  for $\Omega = I \times S$ a number $\eta(\Omega) > 0$ such that the following hold:
    
    For all $p \in [1,\infty)$ there exists a constant $C(p,\Omega) > 0$ such that for all functions $u \in \GSBV^p(\Omega)$ with $\int_{J_u} |\nu'| \, d\Hm^2 \leq \eta(\Omega)$ (as before $\nu=(\nu_1,\nu') \in S^2$ is the measure theoretic normal to $J_u$) there exists a set $\omega \subseteq \R^3$ with 
    \begin{equation} \label{eq: est volume exceptional}
    \mathcal{L}^3(\omega)\leq   C(p,\Omega) \int_{J_u} |\nu'| \, d\Hm^2
    \end{equation}
    and
    \begin{equation} \label{eq: est surface exceptional}
    \sup_{\varphi \in C^0_c(I\times S), |\varphi| \leq 1} \int_{\omega} \partial_i \varphi \, dx \leq C(p,\Omega) \int_{J_u} |\nu'| \, d\Hm^2\quad\text{ for }i=1,2,
    \end{equation}
    and a function $w \in L^p(\Omega)$ with $w = u$ on $\Omega \setminus \omega$ and
    \begin{equation} \label{eq: est nabla w} 
    \int_{\Omega} |\nabla' w|^p \, dx \leq C(p,S) \int_{\Omega} |\nabla' u|^p \, dx,
    \end{equation}
    where $\nabla' = (\partial_2,\partial_3)$.
    \end{theorem}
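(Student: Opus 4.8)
The plan is to obtain the statement by applying the two-dimensional approximation result of Theorem~\ref{theorem: Sobolev approximation} in each horizontal slice $\{x_1\}\times S$ and gluing the outcomes; the only genuine difficulty is to ensure that the glued set and function are Lebesgue measurable on $\R^3$. Recall the slicing properties of $\GSBV^p$: for $\Lm^1$-a.e.\ $x_1\in I$ the slice $u_{x_1}:=u(x_1,\cdot)$ lies in $\GSBV^p(S)$, one has $\nabla u_{x_1}=(\nabla' u)(x_1,\cdot)$ a.e.\ in $S$, and by the coarea formula for rectifiable sets $\int_I\Hm^1(J_{u_{x_1}})\,dx_1=\int_{J_u}|\nu'|\,d\Hm^2$, with $x_1\mapsto\Hm^1(J_{u_{x_1}})$ measurable. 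Let $\eta(S)>0$ and $C(p,S)>0$ be the constants of Theorem~\ref{theorem: Sobolev approximation}, set $\eta(\Omega):=\eta(S)$, and put $E:=\{x_1\in I:\Hm^1(J_{u_{x_1}})>\eta(S)\}$, so that $\Lm^1(E)\le\eta(S)^{-1}\int_{J_u}|\nu'|\,d\Hm^2$ by Chebyshev's inequality. For $x_1\in I\setminus E$ the slice $u_{x_1}$ satisfies the hypotheses of Theorem~\ref{theorem: Sobolev approximation}, which furnishes $\omega_{x_1}\subseteq S$ and $w_{x_1}$ that agree on $S\setminus\omega_{x_1}$ and satisfy the two-dimensional analogues of \eqref{eq: est volume exceptional}--\eqref{eq: est nabla w}, with $\Hm^1(J_{u_{x_1}})$ in place of $\int_{J_u}|\nu'|\,d\Hm^2$ and $\nabla' u(x_1,\cdot)$ in place of $\nabla' u$, together with a bound on $\|w_{x_1}\|_{L^p(S)}$ in terms of $\|u(x_1,\cdot)\|_{L^p(S)}$ and $\|\nabla' u(x_1,\cdot)\|_{L^p(S)}$ (implicit in the local nature of the extension). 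For $x_1\in E$ we set $\omega_{x_1}:=S$ and $w_{x_1}:=0$. Finally define $\omega:=\bigcup_{x_1\in I}\{x_1\}\times\omega_{x_1}$ and $w(x_1,y):=w_{x_1}(y)$.

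Granting that $\omega$ is $\Lm^3$-measurable and $w$ is measurable, all assertions follow from Fubini's theorem. Indeed $\Lm^3(\omega)=\int_I\Lm^2(\omega_{x_1})\,dx_1\le C(p,S)\int_{I\setminus E}\Hm^1(J_{u_{x_1}})\,dx_1+\Lm^2(S)\,\Lm^1(E)\le C(p,\Omega)\int_{J_u}|\nu'|\,d\Hm^2$, which is \eqref{eq: est volume exceptional}. For \eqref{eq: est surface exceptional} with $i\in\{2,3\}$ and $\varphi\in C^1_c(I\times S)$ with $|\varphi|\le1$, Fubini gives $\int_\omega\partial_i\varphi\,dx=\int_I\big(\int_{\omega_{x_1}}\partial_i\varphi(x_1,\cdot)\,dy\big)dx_1$, where the inner integral vanishes for $x_1\in E$ (since $\varphi(x_1,\cdot)$ has compact support in $S$) and for $x_1\notin E$ is bounded by $\Per(\omega_{x_1};S)\le C(p,S)\Hm^1(J_{u_{x_1}})$; hence the total is at most $C(p,\Omega)\int_{J_u}|\nu'|\,d\Hm^2$. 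The slice-wise weak gradients assemble into the distributional derivative $\nabla' w$, so $\int_\Omega|\nabla' w|^p\,dx=\int_{I\setminus E}\int_S|\nabla w_{x_1}|^p\,dy\,dx_1\le C(p,S)\int_\Omega|\nabla' u|^p\,dx$, which is \eqref{eq: est nabla w}; finally $w=u$ on $\Omega\setminus\omega$ by construction, and $w\in L^p(\Omega)$ because $\int_I\|w_{x_1}\|_{L^p(S)}^p\,dx_1\lesssim\|u\|_{L^p(\Omega)}^p+\|\nabla' u\|_{L^p(\Omega)}^p<\infty$.

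The crux is therefore the measurability of $\omega$ and $w$: an arbitrary slice-by-slice choice of $(\omega_{x_1},w_{x_1})$ need not depend measurably on $x_1$, and a union of measurable planar slices need not be $\Lm^3$-measurable. This is where the explicit ball construction underlying Theorem~\ref{theorem: Sobolev approximation} is essential: it must be carried out by a fixed, choice-free recipe so that it depends measurably on its input. I would feed each slice a canonically selected initial cover of $J_{u_{x_1}}$ by boundedly many balls of total radius $\lesssim\Hm^1(J_{u_{x_1}})$, chosen by a fixed rule (for instance via dyadic cubes meeting $J_{u_{x_1}}$ at a scale determined by $\Hm^1(J_{u_{x_1}})$; equivalently, one may cover the three-dimensional set $J_u$ once and for all by countably many cylinders with axes parallel to $e_1$ and read off their $x_1$-slices). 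Since $J_u$ is a fixed Borel set, the slices $J_{u_{x_1}}$, and hence these initial covers, depend measurably on $x_1$, because projections of Borel sets are Lebesgue measurable. One then verifies that every subsequent step of the construction (merging overlapping balls, dilating them by the prescribed factor, choosing the stopping radii, and defining the extension $w_{x_1}$ on the grown balls) is a Borel operation on the at most countable tuple of ball parameters and on $(u_{x_1},\nabla' u(x_1,\cdot))$, which in turn depends measurably on $x_1$. Hence $x_1\mapsto(\omega_{x_1},w_{x_1})$ is measurable, $\omega$ is $\Lm^3$-measurable, and $w$ is measurable on $\Omega$, so the estimates of the preceding paragraph apply. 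I expect the main obstacle to be exactly this verification that the construction respects measurability throughout, together with the bookkeeping of the anisotropic weight $|\nu'|$ in passing from $J_u$ to the slice-wise covers; the rest is the routine Fubini computation above.
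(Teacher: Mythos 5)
Your outline correctly isolates the main obstruction (joint measurability of $(x_1,y)\mapsto(\omega_{x_1},w_{x_1})$ under a slice-wise application of Theorem \ref{theorem: Sobolev approximation}), which is exactly the difficulty the paper emphasizes, but the proposed resolution does not hold up at the decisive step. The first concrete recipe, covering $J_{u_{x_1}}$ by dyadic cubes ``at a scale determined by $\Hm^1(J_{u_{x_1}})$'', does not produce a cover with total radius $\lesssim \Hm^1(J_{u_{x_1}})$: a slice jump set of tiny $\Hm^1$-measure can meet essentially every dyadic cube of a fixed scale, so neither the smallness hypothesis of Theorem \ref{theorem: Sobolev approximation} nor the perimeter bound $\Per(\omega_{x_1})\lesssim \Hm^1(J_{u_{x_1}})$ can be fed by such a cover. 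Covers with controlled total radius are Vitali-type covers adapted to the set, and these involve genuinely non-canonical choices in each slice; consequently the assertion that ``every subsequent step is a Borel operation'' cannot start, because already the initial datum of the slice-wise construction is not a measurable function of $x_1$. This is precisely why the paper does not apply the two-dimensional theorem as a black box.

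The second recipe you mention, covering $J_u$ once by cylinders with axes parallel to $e_1$ and reading off slices, is indeed the paper's route, but it is incompatible with the rest of your argument as written, and the missing pieces are the substance of the actual proof. A single Vitali cover of $J_u$ controls $\sum_i \tilde r_i^2$ by $\Hm^2(J_u)$, not by $\int_{J_u}|\nu'|\,d\Hm^2$; the paper first rescales anisotropically by $\delta=\big(\Hm^2(J_U)\big)^{-1}\int_{J_U}|\nu'|\,d\Hm^2$ so that the cylinder cover yields $\int_I \sum_i r_{i,x_1}\,dx_1 \lesssim \int_{J_U}|\nu'|\,d\Hm^2$. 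Even then the slice-wise sum of radii is controlled only on average, so one cannot invoke Theorem \ref{theorem: Sobolev approximation} per slice: instead the paper introduces the bad-slice set $I_{ad}$ defined through the covering radii (not through $\Hm^1(J_{u_{x_1}})$ as in your set $E$), runs the ball construction of Lemma \ref{lemma: balls} simultaneously in all slices starting from the cylinder cross-sections (so that for finitely many cylinders the evolving set remains a finite union of cylinders, which is what makes $E_t$ measurable), and chooses a \emph{single} stopping time $t_N$ for all slices by a mean-value argument applied to the $(t,x_1)$-integrated quantities from Lemma \ref{lemma: balls Fubini} --- a slice-dependent optimal time would reintroduce the measurability problem. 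Finally, since only finitely many cylinders can be handled at each stage and the circles carry residual jumps of $U$, the resulting $w_N$ is not yet Sobolev in $x'$; the paper controls $\int_{J_{w_N}}|\nu'|\,d\Hm^2$ by a coarea estimate and passes to a weak $L^p$ limit $w$ with $D_2w,D_3w\in L^p$, together with an $L^p$ bound on $w_N$ obtained from a second Fubini estimate with $f=|U|^p$ (a bound your sketch assumes but Theorem \ref{theorem: Sobolev approximation} does not provide). None of these steps --- the $\delta$-rescaling, the radii-based admissible set, the common time $t_N$, and the finite-$N$ approximation with the jump and weak-limit control --- appears in your proposal, and without them the Fubini computation in your second paragraph has no measurable objects to act on.
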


Using the usual Poincar\'e inequality for $w$ on each slice for the function $a(x_1) = \fint_S w(x_1,x') dx'$ this implies Proposition \ref{prop: poincareni}.

Another consequence of Theorem \ref{theorem: Sobolev approximation 3d} is the following two-dimensional statement (which in turn implies inequality \ref{eq: poincare leaci}), which follows directly by considering functions $u(x_1,x') = u(x')$.

\begin{theorem}\label{thm: Sobolev approximation precise}
Let $S\subseteq \R^2$ be open, bounded, with Lipschitz boundary. Then there exists a number $\eta(S)>0$ and a constant $C(S)$ such that the following holds:

For all $p\in[1,\infty)$ there exists a constant $C(p,S)>0$ such that for all functions $u\in \GSBV^p(S)$ with $\Hm^1(J_u) \leq \eta(S)$ there exists a bounded set $\omega \subseteq \R^2$ with $\Per(\omega) \leq C(S) \Hm^1(J_u)$ and a function $w\in W^{1,p}(S)$ with $w=u$ in $S\setminus \omega$ and
\[
\int_S |\nabla w|^p\,dx \leq C(p,S) \int_S |\nabla u|^p\,dx.
\]
\end{theorem}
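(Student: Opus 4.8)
The plan is to deduce Theorem~\ref{thm: Sobolev approximation precise} from Theorem~\ref{theorem: Sobolev approximation 3d} by applying the three-dimensional result to the cylindrical extension of $u$ and then slicing. Fix $I:=(0,1)$ and $\Omega:=I\times S$, and set $\eta(S):=\eta(\Omega)$. Given $u\in\GSBV^p(S)$ with $\Hm^1(J_u)\le\eta(S)$, define $\bar u$ on $\Omega$ by $\bar u(x_1,x'):=u(x')$. A routine check — truncate $u$ at level $m$ and use that the product of an $\SBV$ function with a constant is again $\SBV$ — shows $\bar u\in\GSBV^p(\Omega)$, with $\nabla'\bar u(x_1,x')=\nabla u(x')$, with $J_{\bar u}=I\times J_u$ up to an $\Hm^2$-null set, and with measure theoretic normal $\nu=(0,\nu_u(x'))$ along $J_{\bar u}$, where $\nu_u$ is the unit normal of $J_u$ in $\R^2$. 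Hence $|\nu'|\equiv 1$ on $J_{\bar u}$ and
\[
\int_{J_{\bar u}}|\nu'|\,d\Hm^2=\Hm^2(I\times J_u)=\Hm^1(J_u)\le\eta(\Omega),
\]
so the hypotheses of Theorem~\ref{theorem: Sobolev approximation 3d} are met.

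Applying that theorem yields a set $\bar\omega\subseteq\R^3$ and a function $\bar w\in L^p(\Omega)$ with $\bar w=\bar u$ on $\Omega\setminus\bar\omega$, with $\int_\Omega|\nabla'\bar w|^p\,dx\le C(p,S)\int_S|\nabla u|^p\,dx$, with $\Lm^3(\bar\omega)\le C\,\Hm^1(J_u)$, and with the relative surface bound \eqref{eq: est surface exceptional}, again with right-hand side $C\,\Hm^1(J_u)$; as the set $\bar\omega$ depends only on $J_{\bar u}$, these last two constants may be taken independent of $p$. Write $\omega_t:=\{x'\in\R^2:(t,x')\in\bar\omega\}$ and $w_t:=\bar w(t,\cdot)$. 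Since $\partial_2\bar w,\partial_3\bar w\in L^p(\Omega)$ — which is exactly what the bound on $\nabla'\bar w$ encodes, the construction having removed the jumps of $\bar u$ in the $x'$-directions — a Fubini-type argument together with $S$ being a bounded Lipschitz domain gives that for a.e.\ $t\in I$ one has $w_t\in W^{1,p}(S)$ with $\nabla w_t=(\nabla'\bar w)(t,\cdot)$, while $w_t=u$ on $S\setminus\omega_t$ and $\omega_t$ has finite perimeter; slicing $\1_{\bar\omega}$ in the $x_1$-direction and using \eqref{eq: est volume exceptional}, \eqref{eq: est surface exceptional} and the gradient bound, we obtain
\[
\int_I\Lm^2(\omega_t)\,dt\le C\,\Hm^1(J_u),\qquad \int_I\Per(\omega_t)\,dt\le C\,\Hm^1(J_u),\qquad \int_I\int_S|\nabla w_t|^p\,dx'\,dt\le C(p,S)\int_S|\nabla u|^p\,dx.
\]

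It remains to select a single good slice. By Chebyshev's inequality, the set of $t\in I$ for which one of $\Lm^2(\omega_t)$, $\Per(\omega_t)$, $\int_S|\nabla w_t|^p$ exceeds four times its average has measure at most $3|I|/4<|I|$; intersecting with the full-measure set of $t$ for which the properties above hold, we pick such a $t_0$ and set $\omega:=\omega_{t_0}$, $w:=w_{t_0}$. Then $w\in W^{1,p}(S)$, $w=u$ on $S\setminus\omega$, $\int_S|\nabla w|^p\,dx\le C(p,S)\int_S|\nabla u|^p\,dx$ and $\Per(\omega)\le C(S)\,\Hm^1(J_u)$, and $\omega$ is bounded because the exceptional set produced by the ball construction lies in a fixed neighbourhood of $J_{\bar u}\subseteq\overline\Omega$.

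The two points I would expect to cost the most care are, first, that an a.e.\ slice of $\bar w$ is a genuine $W^{1,p}(S)$ function rather than merely a function with $L^p$ partial derivatives in the $x'$-directions (this is precisely why Theorem~\ref{theorem: Sobolev approximation 3d} must provide $\bar w$ with $\nabla'\bar w\in L^p(\Omega)$ slicewise), and second, the passage from the $\Omega$-relative perimeter controlled by \eqref{eq: est surface exceptional} to the absolute perimeter of $\omega$ in $\R^2$; the latter is handled by first extending $u$ to a slightly larger Lipschitz domain so that the exceptional set stays in the interior. Finally, one may observe that Theorem~\ref{thm: Sobolev approximation precise} could equally be read off directly from the two-dimensional construction underlying Theorem~\ref{theorem: Sobolev approximation}, of which Theorem~\ref{theorem: Sobolev approximation 3d} is the slicewise counterpart.
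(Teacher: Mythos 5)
Your reduction -- extend $u$ cylindrically to $\bar u(x_1,x'):=u(x')$ on $I\times S$ with $|I|=1$, apply Theorem \ref{theorem: Sobolev approximation 3d}, slice in $x_1$, and select a good slice by Chebyshev -- is exactly the shortcut alluded to after the statement of Theorem \ref{thm: Sobolev approximation precise}, and it is not circular, since the three-dimensional theorem is proved independently in Section \ref{sec: threed}. It differs, however, from the proofs the paper actually writes out in Section \ref{sec: twod}: one via the $\SBD^p$ approximation of Carriero et al.\ combined with Korn's inequality, and one via the two-dimensional ball construction (Theorem \ref{theorem: Sobolev approximation} with $T=1$), of which the three-dimensional proof is the slicewise version -- so your argument goes up to three dimensions only to slice back down. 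Most of your steps are sound: $\bar u\in\GSBV^p(I\times S)$ with $\int_{J_{\bar u}}|\nu'|\,d\Hm^2=\Hm^1(J_u)$, the Fubini/test-function argument showing $w_t\in W^{1,p}(S)$ for a.e.\ $t$ (the bound \eqref{eq: est nabla w} is indeed to be read as $\partial_2\bar w,\partial_3\bar w\in L^p$, and the directions in \eqref{eq: est surface exceptional} are $i=2,3$ as in Theorem \ref{theorem: Sobolev approximation 3d precise}), the identity $w_t=u$ a.e.\ on $S\setminus\omega_t$, and the Chebyshev selection.

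The genuine gap is the perimeter step. The displayed claim $\int_I\Per(\omega_t)\,dt\le C\,\Hm^1(J_u)$ does not follow from \eqref{eq: est surface exceptional}: that estimate, and hence its sliced version, controls only the variation of $\1_{\omega_t}$ \emph{inside} $S$ (test functions are compactly supported in $I\times S$), while Theorem \ref{thm: Sobolev approximation precise} asks for the absolute perimeter in $\R^2$ with a constant $C(S)$ independent of $p$; the missing contribution $\Hm^1(\partial^*\omega_t\cap\partial S)$ can a priori be of size $\Hm^1(\partial S)$ rather than $\Hm^1(J_u)$. Your proposed remedy -- extend $u$ to $S'\Supset S$ ``so that the exceptional set stays in the interior'' -- cannot be extracted from the statement of Theorem \ref{theorem: Sobolev approximation 3d}, which gives no localization of $\omega$ at all, and it is not even literally true of the paper's construction, whose exceptional set is $E_T\cup\bigl((I\setminus I_{ad})\times S\bigr)$ and thus contains entire cross-sections $\{x_1\}\times S$ far from $J_{\bar u}$. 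The step can be repaired within your framework: set $\omega:=\omega_{t_0}\cap S$ (which also gives boundedness), include $\Lm^2(\omega_t\cap S)$ in the Chebyshev selection using \eqref{eq: est volume exceptional}, and invoke the $BV$ trace estimate on the Lipschitz domain $S$, namely $\Per(E;\R^2)\le\Per(E;S)+C(S)\bigl(\Lm^2(E)+\Per(E;S)\bigr)$, to pass from relative to absolute perimeter. But then the perimeter constant inherits the $p$-dependence of \eqref{eq: est volume exceptional}, so to obtain the $p$-independent $C(S)$ of the statement you must, as you yourself hint, inspect the proof of Theorem \ref{theorem: Sobolev approximation 3d precise} to see that $\omega$ and the bounds \eqref{eq: est volume exceptional precise}, \eqref{eq: est surface exceptional precise} depend only on the jump set -- at which point it is simpler and cleaner to quote the two-dimensional Theorem \ref{theorem: Sobolev approximation} with $T=1$, as the paper does.
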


We note that the conditions $\Hm^1(J_u) \leq \eta(S)$ here and $\int_{J_u} |\nu'| \, d\Hm^2 \leq \eta(\Omega)$ in Theorem \ref{theorem: Sobolev approximation 3d} are necessary, as can be seen in Figure \ref{fig: counterexample}.

\begin{figure}
    \includegraphics{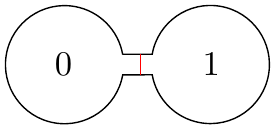}
\caption{Functions with jump set larger than $\eta(S)$ cannot be approximated with $W^{1,p}(S)$ functions in most of $\Omega$. The above function is piecewise constant but not close to any constant function.}\label{fig: counterexample}
\end{figure}

As an immediate application of Proposition \ref{prop: poincareni}, we investigate an anisotropic version of the Mumford-Shah functional (see e.g. \cite{mumford1989optimal,AmFuPa}) in a three dimensional domain $I\times S$. Here, one may consider $x_1\in I$ as a time variable and $x' = (x_2,x_3)$ as a space variable in a grayscale video file $u\in \GSBV^p(I\times S)$. We study in Section \ref{sec: Mumford Shah} the functional
\[
    F_\eps(u) := \int_{I\times S} \frac1\eps |\nabla' u|^p + |\partial_1 u |^p \,dx + \int_{J_u} \frac1\eps |\nu'| + |\nu_1|\,d\Hm^2.
    \]

    Here $\nu \in S^2$ is the measure theoretic normal to the jump set $J_u$ and $\eps >0$ is a small parameter, so that variations in $x'$ are penalized much more harshly than variations in $x_1$. As $\eps$ tends to zero, we expect compactness of finite-energy sequences, with limit functions depending only on $x_1$ and $\Gamma$-convergence of $F_\eps$ to the one-dimensional Mumford-Shah functional $F_0:L^1(I\times S)\to [0,\infty]$,
\[
    F_0(u) = \begin{cases} \Lm^2(S) \left(\int_I |u'|^p\,dx_1 + \#J_u\right) &\text{ if }u(x) = u(x_1)\in \SBV^p(I)\\
        \infty & \text{ otherwise.}
    \end{cases}    
\]

We do indeed prove this in Theorem \ref{theorem: Gamma convergence}. While $\Gamma$-convergence is obvious, the compactness statement is not. In fact, the main difficulty lies in showing that a function with finite $F_\eps$ is close in measure to a function only of $x_1$. Here we use Proposition \ref{prop: poincareni}.

The rest of the article is structured as follows: First, we introduce the necessary notation. Then we present the two-dimensional countable ball construction. For better readability we first use this construction to prove the two-dimensional result Theorem \ref{thm: Sobolev approximation precise} in Section \ref{sec: twod}. In Section \ref{sec: threed} we present the transfer of this construction to three dimensions and show Theorem \ref{theorem: Sobolev approximation 3d}. Then we showcase how this can be used to prove a $\Gamma$-convergence result for the anisotropic Mumford-Shah functional in Section \ref{sec: Mumford Shah}. 

\subsection{Notation}

We will write $C$ or $c$ for generic constants that may change from line to line but do not depend on the problem parameters.
For set of skew-symmetric matrices in $\R^{n\times n}$ we write $Skew(n)$.
For the ease of notation, we always identify vectors in $\R^n$ with their transposes.
For vectors $x = (x_1,x_2,x_3) \in \R^3$ we write $x' = (x_2,x_3)$.
Moreover, we write $\nabla' = (\partial_2,\partial_3)$.
For a measurable set $B\subseteq \R^n$ we use the notation $\mid B\mid $ or $\mathcal{L}^n(B)$ to denote its $n$-dimensional Lebesgue measure.
Similarly, we denote by $\mathcal{H}^k(B)$ its $k$-dimensional Hausdorff measure.
Moreover, we use standard notation for $L^p$-spaces and Sobolev spaces $W^{1,p}$.
In addition, we denote by $BV$ the space of functions with bounded variation, by $SBV^p$ the space of special functions of bounded variation (with $p$-integrability of the absolutely continuous part), and by $GSBV^p$ the space of generalized special functions of bounded variation as introduced in \cite{AmFuPa}.
In particular, we use for a function $u \in SBV^p$ the decomposition
\[
Du = \nabla u \, d\mathcal{L}^n + [u] \otimes \nu \, d\mathcal{H}^{n-1}_{|J_u},
\]
where $J_u$ is the jump set of $u$, $\nu \in S^{n-1}$ is the measure theoretic normal to $J_u$ and $[u] = u^+ - u^-$ for the approximate limits $u^+$ and $u^-$ on $J_u$.
In the specific case of an indicator function $u = \1_{\omega}$ for a measurable $\omega \subseteq \R^n$ we write $\Per(\omega) := |Du|(\R^n)$ for its perimeter.
Eventually, we write $GSBD^p$ for the space of specialized special functions of bounded deformation as introduced in \cite{dal2013generalised}.
In particular, for $u \in  GSBD^p$ we denote by $e(u)$ the density of the absolutely continuous part of the symmetrized derivative $Eu$.
In the specific case that $u \in W^{1,p}$, this means $e(u) = \frac{\nabla u + \nabla u ^T}2$.

\section{Two-dimensional result: Proof of Theorem \ref{thm: Sobolev approximation precise}} \label{sec: twod}

\subsection{A Proof using Korn's Inequality in $\SBD^p$}\label{sec: SBD}

One way to prove Theorem \ref{thm: Sobolev approximation precise} is to use the existing (and more complicated) approximation results in $\SBD^p$ which were used to to prove a Korn inequality in $\SBD^p$.

\begin{proof}[Proof of Theorem \ref{thm: Sobolev approximation precise}]
Let $u \in \GSBV^p(S)$. We then define the vector-valued function $v \in  \GSBD^p(S)$ by $v = (u,0)$. By \cite[Theorem 1.2]{CaChSc} there exists $\omega \subseteq S$ with $\mathcal{L}^2(\omega)^{\frac12} + \Hm^1(\partial^* \omega) \leq C \Hm^1(J_v) = C \Hm^1(J_u)$ and a function $w \in W^{1,p}(S;\R^2)$ such that $v = w$ on $S\setminus \omega$ and
\begin{align*}
\int_S |e(w)|^p \, dx \leq C \int_S |e(v)|^p \, dx.
\end{align*} 
By Korn's inequality applied to $w$ there exists $W \in Skew(2)$ such that
\begin{align*}
\int_S |\nabla w - W|^p \, dx \leq C \int_S |e(v)|^p \, dx.
\end{align*}
In particular, we find
\begin{align}
\mathcal{L}^2(\mathcal{S \setminus \omega}) |W_{21}|^p \leq \int_{S \setminus \omega} \left| \begin{pmatrix} \partial_1 u && \partial_2 u \\ 0 && 0
\end{pmatrix} - W \right|^p \, dx \leq C \int_S |e(v)|^p.
\end{align}
If $\mathcal{L}^2(\omega) \leq \frac12 \mathcal{L}^2(S)$, i.e. $\Hm^1(J_u)$ is small enough, we obtain $|W_{21}|^p \leq \frac{2C}{\mathcal{L}^2(S)} \int_S |e(v)|^p$.
As $W$ is skew-symmetric this already implies $|W|^p \leq \frac{2C}{\mathcal{L}^2(S)} \int_S |e(v)|^p$.
Hence, by the triangle inequality we find
\begin{align}
\int_{S} |\nabla w|^2 \, dx &\leq 2^p \int_S |\nabla w - W|^p \, dx + 2^p \int_S |W|^p \, dx \\
& \leq C \int_S |e(v)|^p \, dx \\ 
&\leq  C \int_S |\nabla u|^p \, dx,
\end{align}
which concludes the proof.
\end{proof}

We close this section with the following remark.

\begin{rem}
Later we would like to apply this result in each slice of the form $\{x_1\} \times S$ to obtain exceptional sets $\omega_{x_1} \subseteq S$ and functions $w_{x_1} \in W^{1,p}(S)$ satisfying the conclusion of Theorem \ref{thm: Sobolev approximation precise}.
However, it is not immediate from this proof that one can define a \emph{measurable} set $\omega = \bigcup_{x_1} \{x_1\} \times S$ and a function \emph{measurable} $w: I \times S \to \R$ through $w(x_1,x_2,x_3) = w_{x_1}(x_2,x_3)$.
As the construction in \cite{CaChSc} is rather involved, we present a simpler method in this paper to construct the desired set and function simultaneously in all slices using the ball construction technique.
From our construction it is easy to see that all defined quantities are measurable.
\end{rem}

\subsection{The countable ball construction and the proof of Theorem \ref{theorem: Sobolev approximation}}

In this section, we prove the following more general version of Theorem \ref{thm: Sobolev approximation precise}:

\begin{theorem}\label{theorem: Sobolev approximation}
    Let $S\subseteq \R^2$ be open, bounded, with Lipschitz boundary. Then there exists a number $ \eta(S) > 0$ and a constant $C(S)$  such that the following holds:
    
    For all $p\in [1,\infty)$ there exists a constant $C(p,S)$ such that for all $T>0$ and all functions $u\in \GSBV^p(S)$ with $\Hm^1(J_u) \leq e^{-T}\eta(S)$ there exists a bounded set $\omega \subseteq \R^2$ with $\Per(\omega) \leq e^T C(S) \Hm^1(J_u)$ and a function $w\in W^{1,p}(S)$ with $w=u$ in $S\setminus \omega$ and
    \[
    \int_S |\nabla w|^p\,dx \leq \left( 1+ \frac{C(p,S)}{T}\right) \int_S |\nabla u|^p\,dx.
    \]
    \end{theorem}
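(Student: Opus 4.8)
The plan is to implement the countable ball construction announced in the introduction. The idea is that the jump set $J_u$ has small $\Hm^1$-measure, so we can cover it by a countable family of balls whose total radius is controlled by $\Hm^1(J_u)$ (for instance via the Besicovitch/Vitali-type covering arguments available for $\mathcal{H}^1$-rectifiable sets, or simply by covering $J_u$ by dyadic squares). We then run the standard ball-growth mechanism from Ginzburg-Landau theory: starting from this initial family with total radius $r_0 \lesssim \Hm^1(J_u)$, we alternately \emph{expand} all balls by a common dilation factor and \emph{merge} any balls that overlap (replacing a cluster by a single ball whose radius is the sum of the radii, so that the sum of radii never decreases and the balls stay pairwise disjoint). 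Letting the construction run until the total dilation factor reaches $e^T$, we obtain at the final time a finite or countable disjoint family of balls $\{B(x_j, \rho_j)\}$ covering $J_u$ with $\sum_j \rho_j \leq e^T r_0 \leq e^T C(S)\Hm^1(J_u)$. We set $\omega$ to be the union of these final balls (intersected with a fixed neighborhood of $S$ so it stays bounded); then $\Per(\omega) \leq 2\pi \sum_j \rho_j \leq e^T C(S)\Hm^1(J_u)$, giving the perimeter bound, and we may also arrange $\Hm^1(J_u) \leq e^{-T}\eta(S)$ small enough that $\omega$ does not swallow a definite fraction of $S$.

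The second ingredient is the energy estimate, which is where the factor $1 + C(p,S)/T$ comes from and which I expect to be the main obstacle. Since $u \in \GSBV^p$ and $J_u \cap (S\setminus\omega)$ is empty by construction (the final balls cover $J_u$), the restriction $u|_{S\setminus\omega}$ is in $W^{1,p}(S\setminus\omega)$. We must extend $u$ from $S\setminus\omega$ across each ball $B(x_j,\rho_j)$ to obtain $w \in W^{1,p}(S)$ with a controlled Dirichlet energy. The natural estimate is the following annulus/trace bound: during the ball-growth, the ``new mass'' of annuli swept out can be used to control a boundary integral of $|\nabla' u|^p$ over the sphere $\partial B(x_j,\rho_j)$; more precisely, a mean-value/Fubini argument over the continuum of radii traversed shows that one can choose (or the construction automatically produces) spheres on which $\int_{\partial B}|\nabla u|^p\,d\Hm^1$ is small relative to $\int_{B}|\nabla u|^p$ divided by the logarithmic ``time'' spent, i.e. by $T$. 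One then extends $u|_{\partial B(x_j,\rho_j)}$ into $B(x_j,\rho_j)$ by the harmonic (or $p$-harmonic, or simply zero-homogeneous) extension, whose $L^p$ gradient norm is bounded by $\rho_j \int_{\partial B}|\nabla u|^p\,d\Hm^1$ up to a dimensional constant. Summing over $j$ and using that the balls are disjoint and that the total logarithmic growth is $T$, the extra energy created inside $\omega$ is bounded by $\frac{C(p,S)}{T}\int_S |\nabla u|^p$, while outside $\omega$ we have $\nabla w = \nabla u$; this yields the claimed inequality.

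More concretely, the bookkeeping for the energy goes as follows. Parametrize the ball-growth by the logarithmic time $t\in[0,T]$, so at time $t$ we have a disjoint family $\mathcal{B}(t)$ with $\sum_{B\in\mathcal{B}(t)} r_B(t) = e^t r_0$ (merging only helps this identity). For each ball that exists at the final time $T$, track the annular region $A_j = B(x_j,\rho_j)\setminus(\text{earlier configuration})$ it sweeps; these annuli, together with the initial balls, tile $\omega$. On $A_j\setminus J_u \subset S\setminus(\text{jump})$ the function $u$ is Sobolev, and for a.e.\ radius $r$ in the relevant range the circle $\{|x - x_j| = r\}$ avoids $J_u$ and carries a finite $\int |\nabla u|^p\,d\Hm^1$. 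A Fubini argument in polar coordinates on $A_j$ shows $\int_{r}\int_{\{|x-x_j|=r\}}|\nabla u|^p\,d\Hm^1\, \frac{dr}{r} \leq \int_{A_j} \frac{|\nabla u|^p}{|x-x_j|}\,dx$, and since the logarithmic length of the radius interval for $A_j$ is the time spent growing that cluster, averaging lets us pick a good radius $r_j^*$ with a circle integral $\lesssim \frac{1}{t_j}\int_{A_j}|\nabla u|^p\,\frac{dx}{?}$ — here some care with the weight $1/|x-x_j|$ versus $1/r_j^*$ is needed, and this is precisely the delicate point. Doing the extension ball-by-ball from these good radii (filling the inner disk by the homogeneous-degree-zero extension of the circle trace and discarding the part of the annulus between $r_j^*$ and $\rho_j$, resetting the outer trace accordingly) and then summing the estimates $\int_{B(x_j,r_j^*)}|\nabla w|^p \leq C\, r_j^* \int_{\{|x-x_j|=r_j^*\}}|\nabla u|^p\,d\Hm^1$ over the disjoint clusters produces the bound $\int_\omega |\nabla w|^p \leq \frac{C(p,S)}{T}\int_S|\nabla u|^p$, and adding the trivial contribution from $S\setminus\omega$ finishes the proof. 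The technical heart, and the step I would spend the most effort on, is making this polar-coordinates averaging compatible with the merging operation so that the constant is genuinely independent of $p$ except through the homogeneous extension, and so that all selected objects are measurable (which matters for the later slicing application).
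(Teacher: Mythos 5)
Your outline follows the same strategy as the paper (cover the jump set by countably many balls with total radius $\lesssim \Hm^1(J_u)$, grow and merge, bound $\Per(\omega)$ by the total radius, select good circles by a Fubini/mean-value argument, fill each ball from its boundary trace), but the concrete execution has several steps that would fail. The central one is your selection of good radii \emph{per cluster}: averaging over the annulus $A_j$ swept by a final ball yields a factor $1/t_j$, where $t_j$ is the lifetime of that cluster, and clusters created by late mergers can have $t_j$ arbitrarily small; since the annuli carry disjoint portions of the energy, summing $\frac1{t_j}\int_{A_j}|\nabla u|^p$ does not give $\frac{C}{T}\int_S|\nabla u|^p$. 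The paper avoids this by integrating the growth inequality globally in time (Lemma \ref{lemma: balls Fubini}) and choosing one common time $t_0\in(0,T)$ by the mean value theorem, so that $\sum_i r_i^{t_0}\int_{\partial B_i^{t_0}}|\nabla U|^p\,d\Hm^1 \le \frac1T\int_S|\nabla u|^p\,dx$ holds simultaneously for all balls; $\omega$ is then the union of the time-$t_0$ balls, which is a pairwise disjoint family that still covers the jump set. Your variant of ``discarding the annulus between $r_j^*$ and $\rho_j$'' has the additional defect that these annuli may still contain jump set (after a merger the original covering balls need not lie inside $B(x_j,r_j^*)$) and that balls frozen at different times need not be disjoint, so $w$ would either retain jumps or be ill-defined. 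Also, the zero-homogeneous extension you mention does not satisfy $\int_B|\nabla w|^p\lesssim r\int_{\partial B}|\nabla u|^p\,d\Hm^1$ for $p\ge 2$ (its gradient scales like $1/|x|$ and $\int_0^r s^{1-p}\,ds=\infty$), and the harmonic extension is delicate at $p=1$; the paper instead uses the elementary interpolation $w((1-\theta)x_i^{t_0}+\theta z)=(1-\theta)\fint_{\partial B_i^{t_0}}U\,d\Hm^1+\theta U(z)$, which gives the estimate for every $p\in[1,\infty)$ with an explicit constant.

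Two further gaps: you never extend $u$ beyond $S$, so balls covering jump set near $\partial S$ may have circles partly outside $S$ where no trace of $u$ exists; the paper first extends to $U\in\GSBV^p(S')$ on a larger Lipschitz set and uses the smallness hypothesis $e^T\Hm^1(J_u)\le\eta(S)$ precisely to ensure all relevant circles stay inside $S'$ (your use of $\eta(S)$ only to keep $\omega$ from filling $S$ misses this). Finally, with countably many balls the conclusion $w\in W^{1,p}(S)$ is not automatic from ``$J_u\subseteq\omega$'': one should cover the full singular set $S_U$ of non-approximate-continuity points (not just $J_u$), and then verify Sobolev regularity of $w$, e.g.\ as in the paper via the approximations $w_N$ (modifying only the first $N$ balls), the vanishing of $\Hm^1$ of the residual singular set, and the $\GSBV$ compactness theorem; since measurability of all selected objects is exactly what the later three-dimensional argument needs, this step cannot be waved through.
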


Theorem \ref{thm: Sobolev approximation precise} follows from Theorem \ref{theorem: Sobolev approximation} by taking $T=1$.

Before we prove Theorem \ref{theorem: Sobolev approximation}, we briefly introduce the ball-construction technique from the seminal paper \cite{Sandier}:

\begin{lemma}\label{lemma: finite balls}
Given a finite set of balls $B_i := B(x_i,r_i) \subset \R^d$, $i=1,\ldots,N$, there is a family of finite sets of balls $B_i^t := B(x_i^t,r_i^t)$, $i=1,\ldots,N$, $t\in[0,t_i)$ with collapse times $t_i\in [0,\infty]$ such that for $I_t:= \{i=1,\ldots,N\,;\, t < t_i \}$ we have
\begin{itemize}
\item [(i)] $\bigcup_{i=1}^N B_i \subseteq \bigcup_{i\in I_s} B_i^s \subset \bigcup_{i\in I_t}B_i^t$ whenever $0\leq s \leq t < \infty$.
\item [(ii)] $\sum_{i\in I_t} r_i^t = e^t \sum_{i=1}^N r_i$.
\item [(iii)] $(\overline{B_i^t})_{i\in I_t}$ is pairwise disjoint.
\item [(iv)] $B(x_i^s,e^{t-s}r_i^s) \subseteq B_i^t$ whenever $0\leq s \leq t < t_i$.
\end{itemize}
\end{lemma}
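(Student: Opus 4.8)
The plan is to obtain the family by interlacing two operations. During a \emph{growth phase} all currently active balls expand at exponential rate with frozen centers, $x_i^t=x_i^s$ and $r_i^t=e^{t-s}r_i^s$; this phase is run until the first time two closed balls touch. At such a \emph{merging time} a \emph{merging step} is performed: as long as two distinct active closed balls have nonempty intersection, one picks such a pair $\overline{B(a,\rho)},\overline{B(b,\sigma)}$, removes both, and inserts a single closed ball of radius exactly $\rho+\sigma$ containing their union. Such a ball exists because, if neither of the two contains the other, their smallest enclosing ball has radius $\tfrac12(|a-b|+\rho+\sigma)\le\rho+\sigma$, so keeping its center and enlarging it to radius $\rho+\sigma$ still contains the union (and if one of the two contains the other the claim is trivial). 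One declares one of the two merged indices collapsed at the current time and lets the other survive, carrying the new center and the new radius $\rho+\sigma$.

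Concretely, I would start at $t=0$ with the given balls, first performing a merging step if their closures are not already pairwise disjoint, and then alternate growth phases and merging steps: a growth phase beginning at time $s$ from a pairwise disjoint configuration runs up to
\[
t^\ast=s+\min_{i\ne j\in I_s}\log\frac{|x_i^s-x_j^s|}{r_i^s+r_j^s}\in(s,\infty]
\]
(with $t^\ast=\infty$ when $|I_s|\le 1$), at which point one performs the next merging step. Since every single replacement inside a merging step strictly decreases the finite number of active balls, each merging step terminates, and in total at most $N-1$ replacements ever occur; hence there are only finitely many merging times, the construction is defined for all $t\in[0,\infty)$, and the collapse times $t_i$ and the sets $I_t=\{i:t<t_i\}$ are as in the statement. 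All quantities produced this way are explicit, which will be useful when the construction is transported to slices later.

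It then remains to check \emph{(i)}--\emph{(iv)}. \emph{(i)}: growth enlarges every ball and each replacement inserts a ball containing the two it removes, so $t\mapsto\bigcup_{i\in I_t}B_i^t$ is nondecreasing and contains $\bigcup_i B_i$. \emph{(ii)}: a single replacement leaves $\sum_{i\in I_t}r_i^t$ unchanged (a radius $\rho+\sigma$ replaces $\rho$ and $\sigma$) and a growth phase multiplies it by $e^{t-s}$; since it equals $\sum_{i=1}^N r_i$ at $t=0$ (the initial merging, if any, preserving it), one gets $\sum_{i\in I_t}r_i^t=e^t\sum_{i=1}^N r_i$ for every $t$. \emph{(iii)}: a merging step ends in a pairwise disjoint configuration, and growing such a configuration keeps the closed balls disjoint precisely up to the next collision time $t^\ast$, which is exactly when the next merging step takes place. \emph{(iv)}: one uses the elementary monotonicity ``$B(y,a)\subseteq B(z,b)$ implies $B(y,e^h a)\subseteq B(z,e^h b)$ for $h\ge 0$'' (because $|y-z|+e^h a\le e^h(|y-z|+a)\le e^h b$); combining it across the finitely many growth phases in $[s,t]$ (where $B_i$ and its dilate scale by the same factor) with the inclusions $B_i^{\tau^-}\subseteq B_i^{\tau}$ at the intermediate merging times $\tau<t_i$, an induction over these phases yields $B(x_i^s,e^{t-s}r_i^s)\subseteq B_i^t$ whenever $s\le t<t_i$.

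The step I expect to need care is \emph{(iii)}, i.e.\ that after a merging step the closed balls are again pairwise disjoint: replacing a touching pair by a single ball of radius equal to the \emph{sum} of their radii can create fresh overlaps with balls that were previously disjoint — a long nearly collinear chain of small balls is absorbed into one large round ball which may then swallow a ball lying near the middle of the chain — so the merging step genuinely has to be \emph{iterated} until disjointness is restored, and one must verify, as above, that each individual replacement preserves both the exact normalization in \emph{(ii)} and the containments used for \emph{(iv)}. (If one prefers instead to collapse an entire connected component of overlapping balls in a single replacement, one needs the sharper geometric fact that a connected union of closed balls with radii $\rho_1,\dots,\rho_m$ is contained in a closed ball of radius $\sum_k\rho_k$; this follows by induction on $m$ by splitting a spanning tree of the intersection graph into two subtrees, applying the hypothesis to each, observing that the two resulting enclosing balls meet, and invoking the two-ball case.)
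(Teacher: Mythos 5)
Your proposal is correct and takes essentially the same approach as the paper: an initial merging at $t=0$, then alternating exponential growth phases with iterated pairwise merging of touching balls into a single ball of radius equal to the sum of the two radii, with at most $N-1$ replacements in total. The only cosmetic difference is your choice of center for the merged ball (the smallest-enclosing-ball center rather than the paper's radius-weighted barycenter $\frac{r}{r+r'}x+\frac{r'}{r+r'}x'$), and you actually spell out the verification of (i)--(iv), which the paper leaves as ``straightforward to check.''
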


\begin{proof}
We first replace the balls $B_i$ with balls $B_i^0$ such that (iii) holds. If $\overline{B_i} \cap \overline{B_j}\neq \emptyset$, $i<j$, then $|x_i - x_j| \leq r_i + r_j$. But then
\[
B_i \cup B_j \subset B(x_i^0,r_i^0), \text{ where }x_i^0 := \frac{r_i}{r_i+r_j}x_i + \frac{r_j}{r_i + r_j} x_j\text{ and }r_i^0 := r_i + r_j.
\]
Also we set $t_j := 0$.

We may repeat this replacement at most $N-1$ times until (iii) is satisfied, defining $I_0$ and $(B_i^0)_{i\in I_0}$. We define $I_t := I_0$ and $B_i^t := B(x_i^0, e^t r_i^0)$ for $t<\eps$, where
\[
\eps := \inf\{t>o\,:\,\overline{B_i^t} \cap \overline{B_j^t}\neq \emptyset\text{ for some }i\neq j\in I_0 \}.
\]

If $\eps = \infty$, we are done. Otherwise, at time $\eps$, we repeat first the replacement and then the growing scheme to some larger time. Since at every such time the number of balls is strictly decreased, there are at most $N-1$ such collapsing times, at which point a single ball remains.

This defines $I_t$ and the balls $(B_i^t)_{i\in I_t}$ for all $t\in[0,\infty)$.

It is straightforward to check that (i)-(iv) are satisfied for all times.

\end{proof}

\begin{rem}
We see that as $N$ is replaced by $N+1$, all collapsing times $t_i$ decrease and all radii increase. This is important in the following:
\end{rem}

We wish to apply a version of Lemma \ref{lemma: finite balls} to a covering of the jump set $J_u$. However, as the jump set of a function $u\in \GSBV^p(S)$ is in general not compact, countably many balls are required to cover it. We can however extend the ball construction to the case of countably many balls with $\sum_{i\in \N} r_i < \infty$, c.f.~also \cite{Gladbach}:

\begin{lemma}\label{lemma: balls}
Given an at most countable set of balls $B_i := B(x_i,r_i) \subset \R^d$, $i\in \N$ with
\[
\sum_{i\in \N} r_i < \infty,
\]
there is a family of at most countable sets of balls $B_i^t := B(x_i^t,r_i^t)$, $i\in \N$, $t\in[0,t_i)$ with collapse times $t_i\in [0,\infty]$ such that for $I_t:= \{i\in \N \,;\, t < t_i \}$ we have
\begin{itemize}
\item [(i)] $\bigcup_{i \leq N} B_i \subseteq \bigcup_{i \leq N ,i\in I_s} B_i^s \subseteq \bigcup_{i \leq N ,i\in I_t} B_i^t$ whenever $0\leq s \leq t < \infty$, for all $N\in \N$.
\item [(ii)] $\sum_{i\in I_t} r_i^t \leq e^t \sum_{i\in \N} r_i$ for all $t>0$.
\item [(iii)] For all $t>0$ the family $(\overline{B_i^t})_{i\in I_t}$ is pairwise disjoint.
\item [(iv)] $B(x_i^s,e^{t-s}r_i^s) \subseteq B_i^t$ whenever $0\leq s \leq t < t_i$.
\item [(v)] The balls $B_i^t$ are monotone in the following sense: If $B_i \subseteq B_i'$ for every $i\in \N$, then $t_i' \leq t_i$ and $B_i^t \subseteq (B_i^t)'$ for every $i\in \N$, $t < t_i'$. Also
\[
\bigcup_{i \leq N, i \in I_t} B_i^t \subseteq \bigcup_{i\leq N, i \in I_t'} (B_i^t)'\text{ for all }N\in \N, t \geq 0.
\]
\end{itemize}
\end{lemma}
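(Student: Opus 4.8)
The plan is to obtain the countable family as a monotone limit of the finite constructions of Lemma~\ref{lemma: finite balls} applied to the truncations $(B_1,\dots,B_N)$; the summability $\sum_{i\in\N}r_i<\infty$ is precisely what keeps the radii bounded along this limit, through property~(ii) of the finite case.

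\textbf{The limit construction.} For each $N$, Lemma~\ref{lemma: finite balls} applied to $(B_1,\dots,B_N)$ gives balls $B_i^{t,N}$, collapse times $t_i^N$, and index sets $I_t^N$. First I would record the monotonicity announced in the Remark after Lemma~\ref{lemma: finite balls} in a usable form: if two finite families satisfy $B_i\subseteq B_i'$ for all $i$ then the constructions can be run so that $t_i'\le t_i$, $B_i^t\subseteq (B_i')^t$, and $\bigcup_{i\le M,\,i\in I_t}B_i^t\subseteq\bigcup_{i\le M,\,i\in I_t'}(B_i')^t$; in particular, passing from the $N$-th to the $(N{+}1)$-st truncation, $t_i^N$ is non-increasing in $N$ and the surviving $B_i^{t,N}$ are non-decreasing in $N$. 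Set $t_i:=\lim_N t_i^N\in[0,\infty]$ and $I_t:=\{i\in\N:\,t<t_i\}$. For $t<t_i$ the ball $B_i^{t,N}$ is defined for every $N\ge i$, non-decreasing in $N$, and satisfies $r_i^{t,N}\le e^t\sum_{j\le N}r_j\le e^t\sum_{j\in\N}r_j$ by (ii); these nested balls with bounded radii therefore converge (radii converge, and centres converge because nested balls with convergent radii have Cauchy centres) to an open ball $B_i^t:=B(x_i^t,r_i^t)$.

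\textbf{Checking the properties.} Properties (ii)--(v) pass to the limit essentially by inspection: (ii) from $\sum_{i\in I_t}r_i^t\le\lim_N\sum_{i\in I_t^N}r_i^{t,N}=\lim_N e^t\sum_{i\le N}r_i$ together with monotone convergence; (iii) because strict separation in each truncation survives as non-strict separation, which (iv) upgrades back to strict; (iv) from $B(x_i^s,e^{t-s}r_i^s)\subseteq B_i^{t,N}$ for large $N$; and (v) is the recorded monotonicity read in the limit $N\to\infty$. The genuinely delicate property is~(i). For $x\in\bigcup_{i\le N}B_i$ the finite case --- for which the ``$i\le N$'' restriction holds since merging always keeps the lower index --- gives $x\in B_{i(M)}^{s,M}$ with $i(M)\le N$ and $i(M)\in I_s^M$ for every $M\ge N$; a subsequence argument produces a fixed $i_1\le N$ valid for infinitely many $M$, and since $t_{i_1}^M$ is monotone this forces $t_{i_1}^M>s$ for all $M$, hence $i_1\in I_s$ and $x\in B_{i_1}^s$ (the borderline case $t_{i_1}=s$ sits in the countable set $\{t_j:j\in\N\}$ and is dealt with by approximating $s$ or by the indexing convention); the second inclusion in~(i) then follows from (iv) plus the same tracking of balls that collapse in the limit.

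\textbf{Main obstacle.} The real work lies in (a) the monotonicity of the finite construction invoked above: although flagged as ``straightforward'', it must be argued with care at the level of which original balls currently share a region, since the finite construction is not canonical and the bare ``apply the same merge rule to both families'' is not by itself enough (a single merge step need not send the new ball of the smaller family into the corresponding ball of the larger one unless one also tracks that all centres remain in the convex hulls of the original centres); and (b) the bookkeeping in~(i) ensuring that balls which survive every truncation but collapse in the limit are nonetheless covered. Both of these are where the uniform radius bound coming from $\sum_i r_i<\infty$, and properties (ii)--(iv) of the finite case, are essential.
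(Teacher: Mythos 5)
Your overall strategy is the paper's: run the finite construction of Lemma \ref{lemma: finite balls} on the truncations $(B_1,\dots,B_N)$, use monotonicity in $N$ to get nonincreasing collapse times $t_{i,N}$ and nondecreasing balls $B_{i,N}^t$, and pass to the limit. The genuine gap is that you stop at the naive limit family $\tilde B_i^t:=\bigcup_N B_{i,N}^t$, $t_i:=\lim_N t_{i,N}$, and this family simply does not satisfy (i) (nor, in general, the last inclusion in (v)); the paper explicitly notes this and repairs it by a further right-in-time regularization, $B_i^t:=\bigl(\bigcap_{t<r<t_i}\tilde B_i^r\bigr)^\circ$, which you never introduce. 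The ``borderline case $t_{i_1}=s$'' that you dismiss parenthetically is exactly where the statement fails for your objects, so no indexing convention or approximation of $s$ can rescue the argument: concretely, take two well-separated balls $B_1,B_2$ and a summable sequence of tiny balls accumulating between them, arranged so that in the $N$-th truncation the index-$2$ ball merges into the index-$1$ ball at a time $t_{2,N}$ that is strictly decreasing with limit $t_2$ not attained for any finite $N$. At the time $s=t_2$ we have $2\notin I_s$, while for every $N$ the balls $B_{1,N}^{s}$ and $B_{2,N}^{s}$ are still disjoint, so $\tilde B_1^{s}=\bigcup_N B_{1,N}^{s}$ misses most of $B_2$; hence $\bigcup_{i\le 2,\,i\in I_s}\tilde B_i^{s}\not\supseteq B_1\cup B_2$ and both inclusions in (i) (and the second part of (v)) fail at $s$. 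Your tracking argument contains the corresponding logical leap: from $t_{i_1,M}>s$ for all $M$ you conclude $i_1\in I_s$, but monotone limits only give $t_{i_1}\ge s$. Note also that (i) is needed at \emph{every} time, not almost every time: it is what makes $t\mapsto\bigcup_{i\in I_t}B_i^t$ monotone, which is used verbatim in Lemma \ref{lemma: balls Fubini} and in the three-dimensional construction, so the countable exceptional set of limit collapse times cannot be ignored. With the regularized definition, coverage at $s$ is restored because for every $r>s$ the large-$N$ truncations have already performed the merge by time $r$, so $\tilde B_1^r\supseteq B_1\cup B_2$ and the intersection over $r\in(s,t_1)$ retains this; one must then re-verify (ii)--(iv) for the enlarged balls (Fatou for (ii), an $\eps\searrow 0$ limit for (iv), and the (iv)-based separation argument for (iii)), which is precisely the remaining content of the paper's proof that your write-up omits.

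Two smaller points. Your justification of (iii) for the naive family (``(iv) upgrades non-strict separation back to strict'') is essentially the right mechanism, but it must be spelled out as in the paper: tangency of the closures at time $t$ would force the open balls to intersect at time $t+\eps$, hence intersection of $B_{i,N}^{t+\eps}$ and $B_{j,N}^{t+\eps}$ for large $N$ while both are still alive there, a contradiction. And your concern (a) about the monotonicity of the finite construction is legitimate but is handled inside the paper's induction by the elementary observation that the merge operation itself is monotone (if $B\subseteq \tilde B$ and $B'\subseteq\tilde B'$ then the merged ball of $B,B'$ is contained in the merged ball of $\tilde B,\tilde B'$); flagging it is fine, but the decisive missing idea in your proposal is the regularization of the limit family.
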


Note that the first condition implies in particular that $t\mapsto \bigcup_{i\in I_t} B_i^t$ is monotone. In addition, the index $i(t)$ of the ball covering $B_i^s$ decreases with $t$.

\begin{proof}
We start with the situation where $r_i = 0$ for all $i>N$ for some $N\in\N$. We show (i)-(v) in this case by induction over $N$. Consider first the case $N=0$, where we simply set $t_0 := \infty, x_0^t = x_0, r_0^t := e^t r_0$ for all $t\geq 0$. In particular we note that if $B_0 \subseteq \tilde B_0$, then $B_0^t \subseteq \tilde B_0^t$.

Now assume (i)-(v) hold for $N-1$. Let $\tilde t_i\in [0,\infty]$, $\tilde B_i^t \subset \R^d$ for $i=0,\ldots,N-1$, $t\in[0,\tilde t_i)$, be a family of sets of balls for $B_1,\ldots,B_{N-1}$. Consider $B_N^t:= B(x_N, e^t r_N)$ for $t\geq 0$. Let 
\[
t_N:= \inf\{t\geq 0 \,:,\, \overline B_N^t \cap \bigcup_{i=0}^{N-1} \overline{\tilde B_i^t} \neq \emptyset \}.
\]

By our construction, there is $i_0 \in \{0,\ldots,N-1\}$ such that $\tilde t_i > t_N$ and $\overline{B_N^{t_N}} \cap \overline{\tilde B_i^{t_N}} \neq \emptyset$.

We now set $B_i^t := \tilde B_i^t$ for $i=0,\ldots,N-1$, $t< \min(\tilde t_i, t_N)$.

At time $t_N$, we replace any two balls $B = B(x,r)$, $B' =B(x',r')$ among $\{B_i^{t_N}\,:\,i=0,\ldots,N_1, t_i > t_n\} \cup \{B_N^{t_N}\}$ with $\overline{B} \cap \overline{B'}\neq \emptyset$ with the new ball $B'':= B(x'',r'')$, where
\[
x'':= \frac{r}{r+r'}x + \frac{r'}{r+r'} x' \quad, \quad r'' := r + r'.
\]
We note that if $\overline{B} \cap \overline{B'}\neq \emptyset$, then $|x-x'| \leq r+r'$ and thus $B \cup B' \subseteq B''$. Also, if $B\subseteq \tilde B$ and $B' \subseteq \tilde B'$ then $B'' \subseteq \tilde B''$.

We repeat the replacement of two balls as above until $\overline{B} \cap \overline{B'} = \emptyset$ for all balls in the set. As the replacement happens at least once, namely $\overline{\tilde B_{i_0}^{t_N}} \cap \overline{B_N^{t_N}} \neq \emptyset$, there are at most $N-1$ balls remaining after replacements. The replacements are monotone in the sense of (v). From time $t_N$ onwards, we restart the growing of balls and see that (i)-(v) remain satisfied.

This finishes the proof in the case of finitely many balls.

For the case of countably many balls, we perform the finite construction above for every $N\in \N$, yielding nonincreasing sequences of collapse times $t_{i,N}\in [0,\infty], N\geq i$ and nondecreasing sequences of families of balls $B_{i,N}^t \subseteq \R^d, N\geq i, t< t_{i,N}$. Because of the monotonicities there exist limits for the collapse times $t_i := \lim_{N\to \infty} t_{i,N}$ and limit balls $\tilde B_i^t := \bigcup_{N\to \infty} B_{i,N}^t$ for $t<t_i$. This is indeed a ball because it is the countable union of a nondecreasing sequence of balls, where $r_{i,N}^t \leq e^t \sum_{j\in \N} r_j$ is bounded, so that the union is in fact an open ball.

The family thus created unfortunately does not satisfy (i), (iii), or (v). In order to make them true, we define
\[
B_i^t:= \left(\bigcap_{t<r<t_i} \tilde B_i^r\right)^\circ\text{ for }t<t_i.
\]
We now go through the properties (i)-(v).

For (i), we fix $N\in \N$. Then
\[
\bigcup_{i\leq N,i\in I_t} B_i^t = \bigcup_{i\leq N, i \in I_t} \left(\bigcap_{t<r<t_N} \tilde B_i^r \right)^\circ = \left( \bigcap_{t<r<\min\{t_i\,:\,i\leq N, t_i > t \}} \bigcup_{i\leq N, r<t_i} \tilde B_i^r   \right)^\circ.
\]
This shows (i).

For (ii) by Fatou's Lemma
\[
\sum_{i\in I_t} r_i^t \leq \liminf_{r \searrow t} \sum_{i\in I_r} \tilde r_i^r \leq \liminf_{r\searrow t} \limsup_{N\to \infty} \sum_{i\in I_{r,N}} r_{i,N}^r \leq  e^t \sum_{i\in \N} r_i.
\]

For (iv), since (iv) holds for $N\in\N$, we have $\tilde B(x_i^{s+\eps},e^{t-s}\tilde r_i^{s+\eps}) \subseteq \tilde B_i^{t+\eps}$ for every $\eps>0$. Taking the limit $\eps\to 0$, it follows that $B(x_i^s, e^{t-s}r_i^s) \subseteq B_i^t$.

For (iii), take two balls $B_i^t, B_j^t$ with $i\neq j\in I_t$ and assume that $\overline{B_i^t} \cap \overline{B_j^t} \neq \emptyset$. Since (iv) holds, we also have $\tilde B_i^{t+\eps}\cap \tilde B_j^{t+\eps} \neq \emptyset$ for $\eps>0$ small enough. But then already $B_{i,N}^{t+\eps} \cap B_{j,N}^{t,\eps} \neq \emptyset$ for $N$ large, a contradiction.

Finally, (v) is clear from the construction.

\end{proof}

With this, we can prove the following:

\begin{lemma}\label{lemma: balls Fubini}
Given a family of balls  $B_i^t = B(x_i^t,r_i^t)$ satisfying (i),(iii),(iv) from Lemma \ref{lemma: balls} and a nonnegative function $f\in L^1(\R^d)$, we have
\begin{equation}
\int_0^\infty \sum_{i\in I_t} r_i^t \int_{\partial B_i^t} f\,d\Hm^{d-1} \,dt \leq \int_{\R^d} f(x)\,dx.
\end{equation}

In particular, the surface integral is well-defined for almost every $t\in [0,\infty)$.
\end{lemma}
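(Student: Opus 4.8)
The plan is to exploit the radial structure of each ball together with the co-area-type identity for annuli, and then to use the disjointness property (iii) to sum without overcounting. Fix $i\in\N$ and consider the evolution of the single ball $B_i^t=B(x_i^t,r_i^t)$ over the interval $t\in[0,t_i)$ on which it exists. The key geometric input is property (iv): for $s\le t<t_i$ we have $B(x_i^s,e^{t-s}r_i^s)\subseteq B_i^t$. Taking $s=t-h$ and letting $h\to 0$, this says that the boundary sphere $\partial B_i^t$ moves outward at least as fast as a sphere of radius growing like $e^t$; more precisely, the ``inner radius'' of $B_i^t$ (the largest $\rho$ with $B(x_i^t,\rho)\subseteq\bigcap_{s<t}$ of the future balls) is at least $r_i^t$, and the nested family $\{B_i^t\}_{t\in[0,t_i)}$ sweeps out an annular region $A_i:=\bigcup_{t}B_i^t\setminus\bigcap_t B_i^t$ in which the level sets of the ``radial time'' parameter are exactly the spheres $\partial B_i^t$. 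On such an annular sweep the layer-cake / polar-coordinates computation gives, for each fixed $i$,
\[
\int_0^{t_i} r_i^t\int_{\partial B_i^t} f\,d\Hm^{d-1}\,dt \;\le\; \int_{A_i} f(x)\,dx.
\]
To see this, write $\partial B_i^t$ as a sphere of radius $r_i^t$ about $x_i^t$; parametrize points of the swept annulus by $(t,\sigma)$ with $\sigma\in S^{d-1}$, so that $x=x_i^t+r_i^t\sigma$; the Jacobian of this change of variables is bounded below by $(r_i^t)^{d-1}\cdot\frac{d}{dt}r_i^t$ in the radial direction times the spherical measure, and since $r_i^t$ grows at least exponentially (hence $\frac{d}{dt}r_i^t\ge r_i^t$ a.e., by (iv)), the Jacobian is at least $(r_i^t)^{d-1}r_i^t\,d\Hm^{d-1}(\sigma)\,dt$. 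Changing back from $\sigma$ to $y\in\partial B_i^t$ via $d\Hm^{d-1}(y)=(r_i^t)^{d-1}d\Hm^{d-1}(\sigma)$ yields exactly the weight $r_i^t$ in front of $\int_{\partial B_i^t}f$, and the area formula gives the bound by $\int_{A_i}f$. One should be slightly careful here because the center $x_i^t$ also moves and because $r_i^t$ is only piecewise smooth (jumping up at the collapse/merge times); but the merges only make the swept region larger and the map $(t,\sigma)\mapsto x$ is still Lipschitz on each smooth piece, so the area-formula inequality survives, and the countably many jump times are $\Lm^1$-null in $t$.

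The second and final step is to sum over $i\in I_t$ and integrate in $t$. By property (iii), for each fixed $t$ the closed balls $(\overline{B_i^t})_{i\in I_t}$ are pairwise disjoint; consequently the annular sweeps $A_i$ are pairwise disjoint as well — indeed $A_i\subseteq B_i^{t}$ for $t$ close to $t_i$, and two such nested families with disjoint boundaries at every time cannot overlap. Hence
\[
\int_0^\infty\sum_{i\in I_t} r_i^t\int_{\partial B_i^t} f\,d\Hm^{d-1}\,dt
=\sum_{i\in\N}\int_0^{t_i} r_i^t\int_{\partial B_i^t}f\,d\Hm^{d-1}\,dt
\le\sum_{i\in\N}\int_{A_i}f\,dx
\le\int_{\R^d}f\,dx,
\]
using Tonelli to exchange sum and integral (everything is nonnegative) and the disjointness of the $A_i$ for the last inequality. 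The ``in particular'' claim is then immediate: the inner integral $t\mapsto\sum_{i\in I_t}r_i^t\int_{\partial B_i^t}f\,d\Hm^{d-1}$ is a nonnegative measurable function of $t$ with finite integral, hence finite for $\Lm^1$-a.e.\ $t$, which forces each $\int_{\partial B_i^t}f\,d\Hm^{d-1}$ to be finite for a.e.\ $t$.

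I expect the main obstacle to be the rigorous justification of the single-ball inequality in the presence of the moving centers and the jump discontinuities of $r_i^t$ at merge times — that is, turning the heuristic ``the sphere moves out at speed $\ge r_i^t$ so the Jacobian dominates $r_i^t\,d\Hm^{d-1}\,dt$'' into a clean application of the area formula. The cleanest route is probably to reduce, via property (iv), to comparing with the explicitly-growing family $B(x_i^t,e^{t-s}r_i^s)$ on each inter-collapse interval $[s,s')$, for which the change of variables $x=x_i^t+e^{t-s}r_i^s\sigma$ is a genuine diffeomorphism with computable Jacobian, and then sum the contributions of the at most countably many such intervals, noting that the corresponding pieces of the swept region are nested/disjoint and all contained in $\R^d$. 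Everything else — disjointness of the $A_i$, Tonelli, the a.e.-finiteness corollary — is routine.
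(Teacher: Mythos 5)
Your per-ball step is essentially sound in spirit (and can be made rigorous exactly as in the paper, by noting that property (iv) gives $B(x_i^t,e^h r_i^t)\subseteq B_i^{t+h}$, so that $t\mapsto \int_{B_i^t} f\,dx$ is nondecreasing with a.e.\ derivative at least $r_i^t\int_{\partial B_i^t} f\,d\Hm^{d-1}$; your pointwise Jacobian claim, by contrast, is not literally correct when the center moves, since the normal speed of the trailing side of the sphere is $\dot r_i^t-|\dot x_i^t|$, not $\dot r_i^t$). The genuine gap is the summation step: the swept regions $A_i=\bigcup_t B_i^t\setminus\bigcap_t B_i^t$ are \emph{not} pairwise disjoint once merges occur, which is the typical situation in the ball construction. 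If ball $i$ collapses at time $t_i$ by merging into a surviving ball $j$ (so $t_j>t_i$), then for $t\geq t_i$ the ball $B_j^t$ contains $\bigcup_{s<t_i}B_i^s$, while $B_j^0$ is disjoint from it; hence essentially all of $A_i$ is contained in $A_j$, and $\sum_i\int_{A_i}f$ double-counts the mass absorbed at the merge (with iterated merges the multiplicity can be arbitrarily large). Your justification (``two such nested families with disjoint boundaries at every time cannot overlap'') implicitly uses (iii) for indices at a \emph{common} time, but the overlap happens after $t_i$, when $i\notin I_t$ and (iii) says nothing. So the chain $\sum_i\int_{A_i}f\leq\int_{\R^d}f$ fails and the proof as written does not close.

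The paper sidesteps exactly this by not decomposing space ball-by-ball: it sets $g_N(t):=\sum_{i\in I_t,\,i\leq N}\int_{B_i^t}f\,dx$, which by (i) is a single nondecreasing function of $t$ and by (iii) is bounded by $\int_{\R^d}f\,dx$ at every fixed time; then $\int_0^\infty g_N'(t)\,dt\leq\lim_{T\to\infty}g_N(T)\leq\int_{\R^d}f\,dx$, the a.e.\ derivative is bounded below via (iv) by the weighted surface integrals, and Fatou in $N$ finishes. In that argument the mass swallowed at a merge appears only as an upward jump of $g_N$, which is harmlessly discarded when passing from $g_N'$ to the total increment — precisely the mass your disjointness claim needs to, but cannot, avoid counting twice. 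You could repair your approach by replacing $A_i$ with the annuli genuinely traversed by the spheres between consecutive merge times (these are pairwise disjoint across all indices because the union of balls is monotone and the spheres at time $t$ avoid all balls of time $t$), but that bookkeeping amounts to reproving the monotonicity argument of the paper.
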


\begin{proof}
We define for $N\in \N$ the function $g_N:[0,\infty) \to [0,\infty)$,
\[
g_N(t) := \sum_{i\in I_t, i\leq N} \int_{B_i^t} f(x)\,dx.
\]

Then by (i) every $g_N$ is nondecreasing and therefore differentiable almost everywhere. Then we have by (iii)
\[
\int_{\R^d} f(x)\,dx \geq \lim_{T\to \infty} g_N(T) \geq \int_0^\infty g_N'(t)\,dt.
\]

If $g_N'(t)$ exists and $t\neq t_i$ for any $i\leq N$ , then by (iv)
\begin{align*}
g_N'(t) = \lim_{h\searrow 0} \sum_{i\in I_t,i\leq N} \frac1h \int_{B_i^{t+h} \setminus B_i^t} f(x)\,dx &\geq \lim_{h\searrow 0} \sum_{i\in I_t,i\leq N} \frac1h \int_{B(x_i^t, e^hr_i^t) \setminus B_i^t} f(x)\,dx \\ &= \sum_{i\in I_t,i\leq N} r_i^t \int_{\partial B_i^t} f\,d\Hm^{d-1}.
\end{align*}

By Fatou's Lemma we obtain
\[
\int_0^\infty \sum_{i\in I_t} r_i^t \int_{\partial B_i^t} f\,d\Hm^{d-1} \leq \int_{\R^d} f(x)\,dx,
\]
where for every $i\in \N$ we have
\[
\int_{\partial B_i^t} f\,d\Hm^{d-1} = \lim_{h \searrow 0} \frac1h \int_{B(x_i^t,r_i^t + h) \setminus B_i^t} f(x)\,dx
\]
for almost every $t\in[0,t_i)$, which completes the proof.
\end{proof}

We can now finally prove Theorem \ref{theorem: Sobolev approximation}:

\begin{proof}[Proof of Theorem \ref{theorem: Sobolev approximation}]
We begin by noting that there exists an open bounded set $S' \subset \R^2$ with Lipschitz boundary such that $S\Subset S'$ and for any $u\in \GSBV^p(S)$ there exists an extension $U\in \GSBV^p(S')$ with $U = u$ in $S$ and
\[
\int_{S'} |\nabla U|^p\,dx \leq C(S) \int_S |\nabla u|^p\,dx,\quad \Hm^1(J_{U}) \leq C(S) \Hm^1(J_u).
\]

We wish to cover the singular set
\[
S_U := \{x\in S'\,:\,U\text{ is not approximately continuous at }x\},
\]
where we recall that $U$ is approximately continuous at $x$ if for any $\eps>0$, we have
\[
\lim_{r\searrow 0} \frac{|\{y\in B(x,r)\,:\,|u(y)-u(x)|>\eps\}|}{r^2} = 0.
\]

Recall that $\Hm^1(S_U\setminus J_U) = 0$ and thus $\Hm^1(S_U) = \Hm^1(J_U)\leq C(S) \Hm^1(J_u)$. By Vitali's covering theorem, we can cover $S_U$ with countably many balls $B_i = B(x_i,r_i) ,i\in\N$ such that $\sum_{i\in\N} r_i \leq C(S)\Hm^1(J_u)$. We apply Lemma \ref{lemma: balls} to this family, yielding $B_i^t$ satisfying (i)-(v). We then apply Lemma \ref{lemma: balls Fubini} to this family with $f = |\nabla U|^p \mathds{1}_{S'}\in L^1(\R^2)$. We then have
\begin{equation}\label{eq: Fubini}
\int_0^\infty \sum_{i\in I_t} r_i^t \int_{\partial B_i^t \cap S'} |\nabla U|^p\,d\Hm^1 \,dt\leq C(S) \int_S |\nabla u|^p\,dx.
\end{equation}

By the integral mean value theorem applied to \eqref{eq: Fubini}, we find $t_0\in (0,T)$ such that
\begin{equation}\label{eq: t_0}
 \sum_{i\in I_{t_0}} r_i^{t_0} \int_{\partial B_i^{t_0} \cap S'} |\nabla U|^p\,d\Hm^1 \leq \frac{1}{T} \int_S |\nabla u|^p\,dx.
\end{equation}

By (ii) we have
\begin{equation}\label{eq: radii}
\sum_{i\in I_{t_0}} r_i^{t_0} \leq C(S) e^T \Hm^1(J_u).
\end{equation}

In particular setting $\omega:= \bigcup_{i\in I_{t_0}} B_i^{t_0}$ we find that $\Per(\omega) \leq 2\pi C(S) e^T \Hm^1(J_u)$.

We further define $I := \{i\in I_{t_0}\,:\,B_i^{t_0} \cap S \neq \emptyset\}$. Because by \eqref{eq: radii} any $\diam(B_i^{t_0}) = 2 r_i^{t_0} \leq 2 C(S) e^T \Hm^1(J_u) \leq \dist(S,\partial S')$ if $e^T\Hm^1(J_u) \leq \frac{\dist(S,\partial S')}{2 C(S)} =: \eta(S)$, for all $i\in I$ we have $\partial B_i^t \subset S'$. This allows us to define $w:=u=U$ in $S\setminus \bigcup_{i\in I} B_i^{t_0}$ and using polar coordinates to write any point $y\in B_i^{t_0}$ as $y=(1-\theta) x_i^{t_0} +\theta z$ for $z\in \partial B_i^{t_0}$ and $\theta\in [0,1)$ we define $w$ in $B_i^{t_0}$ as
\begin{equation}
w((1-\theta) x_i^{t_0} +\theta z) := (1-\theta) \fint_{\partial B_i^{t_0}} U\,d\Hm^1 + \theta U(z). 
\end{equation}

A direct calculation shows that
\begin{equation}
\int_{B_i^{t_0}} |\nabla w|^p\,dx \leq C(p) r_i^{t_0} \int_{\partial B_i^{t_0}} |\nabla U|^p\,dx,
\end{equation}
with $C(p) = 1 + \pi^{p+1}$.

Summing up over all $i\in I$ we obtain
\begin{equation}
\int_{\omega \cap S} |\nabla w|^p \leq \frac{C(p,S)}{T} \int_S |\nabla u|^p\,dx.
\end{equation}

Finally we have to show that $w\in W^{1,p}(S)$. Clearly $w$ is the pointwise almost everywhere limit of the functions
\[
w_N(x):= \begin{cases}
w(x) & x\in B_i^{t_0}, i\in I, i\leq N\\
u(x) &\text{otherwise.}
\end{cases}
\]

We see that the singular set of $w_N$ is given by $S_{w_N} = S_u \cap \bigcup_{i\in I, i>N} B_i^{t_0}$. By (i) however,
\begin{equation}
\Hm^1(S_u \cap \bigcup_{i\in I, i>N} B_i^{t_0}) \leq \Hm^1(S_u \cap \bigcup_{i>N} B_i) \to_{N\to \infty} 0.
\end{equation}

By the $\GSBV$ compactness theorem, we have that $\Hm^1(S_w) = 0$ and thus $w\in W^{1,p}(S)$. This concludes our proof.
\end{proof}

\section{Three-dimensional result: Proof of Theorem \ref{theorem: Sobolev approximation 3d}} \label{sec: threed}

Theorem \ref{theorem: Sobolev approximation 3d} follows immediately from Theorem \ref{theorem: Sobolev approximation 3d precise} below by setting $T=1$.

\begin{theorem}\label{theorem: Sobolev approximation 3d precise}
Let $S \subseteq \R^2$ be open, bounded, with Lipschitz boundary and $I \subseteq \R$ an open, bounded interval. Then there exists  for $\Omega = I \times S$ a number $\eta(\Omega) > 0$ such that the following hold:

For all $p \in [1,\infty)$ there exists a constant $C(p,\Omega) > 0$ such that for all $T >0$ and all functions $u \in \GSBV^p(\Omega)$ with $\int_{J_u} |\nu'| \, d\Hm^2 \leq e^{-T}\eta(\Omega)$ there exists a set $\omega \subseteq \R^3$ with 
\begin{equation} \label{eq: est volume exceptional precise}
\mathcal{L}^3(\omega)\leq   C(p,\Omega)e^T \int_{J_u} |\nu'| \, d\Hm^2
\end{equation}
and
\begin{equation} \label{eq: est surface exceptional precise}
\sup_{\varphi \in C^0_c(I\times S), |\varphi| \leq 1} \int_{\omega} \partial_i \varphi \, dx \leq C(p,\Omega)e^T \int_{J_u} |\nu'| \, d\Hm^2\quad{ for }i=2,3,
\end{equation}
and a function $w \in L^p(\Omega)$ with $w = u$ on $\Omega \setminus \omega$ and
\begin{equation} \label{eq: est nabla w precise} 
\int_{\Omega} |\nabla' w|^p \, dx \leq \left(1+ \frac{C(p,S)}{T}\right) \int_{\Omega} |\nabla' u|^p \, dx.
\end{equation}
\end{theorem}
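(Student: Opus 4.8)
The plan is to reduce the statement to the two-dimensional Theorem~\ref{theorem: Sobolev approximation} run simultaneously in all slices $\{x_1\}\times S$; the real content is to organise this so that the resulting exceptional set $\omega$ and function $w$ are genuinely $\Lm^3$-measurable. Write $u_{x_1}:=u(x_1,\cdot)$. By the slicing theory of $\GSBV^p$ (applied to the truncations of $u$ and passing to the limit), for a.e.\ $x_1\in I$ one has $u_{x_1}\in\GSBV^p(S)$ with $\nabla u_{x_1}=\nabla' u(x_1,\cdot)$ a.e., the map $x_1\mapsto\Hm^1(J_{u_{x_1}})$ is measurable, and the coarea identity
\[
\int_{J_u}|\nu'|\,d\Hm^2=\int_I\Hm^1(J_{u_{x_1}})\,dx_1
\]
holds (the coarea factor of $x\mapsto x_1$ restricted to the rectifiable set $J_u$ is exactly $|\nu'|$). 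As in the two-dimensional proof I first fix an open bounded Lipschitz set $S\Subset S'\subseteq\R^2$ and extend: since extension of $\GSBV^p$ across $\partial S$ can be realised by local reflection and a partition of unity, an operation acting only on the $x'$ variables, it commutes with slicing and produces $U\in\GSBV^p(I\times S')$ with $U=u$ on $\Omega$, $\int_{I\times S'}|\nabla'U|^p\,dx\leq C(S)\int_\Omega|\nabla'u|^p\,dx$ and $\int_{J_U}|\nu'|\,d\Hm^2\leq C(S)\int_{J_u}|\nu'|\,d\Hm^2$.

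I then split the slices. Put $K_0:=\tfrac12\dist(S,\partial S')$, $M:=K_0e^{-T}/C(S)$, and $I_{\mathrm{good}}:=\{x_1\in I:\Hm^1(J_{u_{x_1}})\leq M\}$, $I_{\mathrm{bad}}:=I\setminus I_{\mathrm{good}}$, noting $|I_{\mathrm{bad}}|\leq M^{-1}\int_{J_u}|\nu'|\,d\Hm^2$ by Chebyshev. On $I_{\mathrm{bad}}$ I simply set $\omega_{x_1}:=S$ and $w_{x_1}:=0$. On $I_{\mathrm{good}}$ I rerun the proof of Theorem~\ref{theorem: Sobolev approximation}, but with a \emph{common} cutoff time: cover the singular set $S_{U_{x_1}}\subseteq\overline{S'}$ by countably many balls $B_i^{x_1}=B(y_i^{x_1},\rho_i^{x_1})$ with $\sum_i\rho_i^{x_1}\leq C(S)\Hm^1(J_{u_{x_1}})$, chosen so that $x_1\mapsto(y_i^{x_1},\rho_i^{x_1})$ is measurable -- this is the crucial point, and it can be arranged for instance through a dyadic covering of $S_{U_{x_1}}$, since $S_U$ is Borel and thus $\{x_1:Q\cap S_{U_{x_1}}\neq\emptyset\}$ is measurable for every dyadic cube $Q$. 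Growing these balls via Lemma~\ref{lemma: balls} and applying Lemma~\ref{lemma: balls Fubini} slicewise with $f=|\nabla'U(x_1,\cdot)|^p\1_{S'}$, then integrating over $x_1\in I_{\mathrm{good}}$ and using Tonelli, the function
\[
G(t):=\int_{I_{\mathrm{good}}}\,\sum_i\rho_i^{x_1,t}\int_{\partial B_i^{x_1,t}\cap S'}|\nabla'U(x_1,\cdot)|^p\,d\Hm^1\,dx_1
\]
obeys $\int_0^\infty G(t)\,dt\leq C(S)\int_\Omega|\nabla'u|^p\,dx$, so there is a single $t_0\in(0,T)$ with $G(t_0)\leq\tfrac{C(S)}{T}\int_\Omega|\nabla'u|^p\,dx$. (Joint measurability of the integrand in $(x_1,t)$ is inherited from the measurable dependence of the initial balls on $x_1$ together with the deterministic, piecewise-explicit nature of Lemma~\ref{lemma: balls}.)

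With $t_0$ fixed I set $\omega_{x_1}:=\bigcup_i B_i^{x_1,t_0}$ for $x_1\in I_{\mathrm{good}}$ and define $w_{x_1}$ on each ball by the radial interpolation of the boundary trace of $U(x_1,\cdot)$ exactly as in the proof of Theorem~\ref{theorem: Sobolev approximation}; the choice of $M$ forces $\diam(B_i^{x_1,t_0})\leq2\sum_j\rho_j^{x_1,t_0}\leq2e^TC(S)M=2K_0\leq\dist(S,\partial S')$, so any ball meeting $S$ has its boundary inside $S'$ and $U$ has no jumps on $\partial B_i^{x_1,t_0}$. Then $\omega:=\bigcup_{x_1\in I}\{x_1\}\times\omega_{x_1}$ and $w(x_1,\cdot):=w_{x_1}$ are measurable by construction, and $w=u$ on $\Omega\setminus\omega$. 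For the volume bound: on $I_{\mathrm{good}}$ the $\overline{B_i^{x_1,t_0}}$ are disjoint with $\max_i\rho_i^{x_1,t_0}\leq\sum_i\rho_i^{x_1,t_0}\leq e^TC(S)\Hm^1(J_{u_{x_1}})\leq K_0$, so $\Lm^2(\omega_{x_1})=\pi\sum_i(\rho_i^{x_1,t_0})^2\leq\pi K_0e^TC(S)\Hm^1(J_{u_{x_1}})$; on $I_{\mathrm{bad}}$, $\Lm^2(\omega_{x_1})=\Lm^2(S)$ and $|I_{\mathrm{bad}}|$ is small; integrating gives $\Lm^3(\omega)\leq C(p,\Omega)e^T\int_{J_u}|\nu'|\,d\Hm^2$. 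For the surface bound with $i\in\{2,3\}$: for admissible $\varphi$, Fubini and the fact that $\Per$ of a disjoint union of balls equals $\sum_i2\pi\rho_i^{x_1,t_0}$ give $\int_\omega\partial_i\varphi\,dx\leq\int_I\Per(\omega_{x_1})\,dx_1\leq C(p,\Omega)e^T\int_{J_u}|\nu'|\,d\Hm^2$ (on $I_{\mathrm{bad}}$ one uses $\Per(\omega_{x_1})=\Per(S)$ and smallness of $|I_{\mathrm{bad}}|$). For the gradient bound: $\nabla'w=0$ on $I_{\mathrm{bad}}$, and on $I_{\mathrm{good}}$ the two-dimensional ball estimate yields $\int_S|\nabla w_{x_1}|^p\,dx'\leq\int_S|\nabla u_{x_1}|^p\,dx'+C(p)\sum_i\rho_i^{x_1,t_0}\int_{\partial B_i^{x_1,t_0}\cap S'}|\nabla'U(x_1,\cdot)|^p\,d\Hm^1$, so integration and the choice of $t_0$ give $\int_\Omega|\nabla'w|^p\,dx\leq\big(1+\tfrac{C(p,S)}{T}\big)\int_\Omega|\nabla'u|^p\,dx$. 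Finally $w\in L^p(\Omega)$ since $w$ equals $u$ off $\omega$, vanishes on $I_{\mathrm{bad}}$, and is an explicit interpolation of integrable data on the balls.

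The step I expect to be the main obstacle is precisely the one emphasised in the introduction: performing the covering, the ball growth and the averages defining $w$ so that all of it depends measurably on $x_1$, so that $\omega$ is an $\Lm^3$-measurable subset of $\R^3$ -- $\Lm^2$-measurability of each slice $(\{x_1\}\times S)\cap\omega$ would not suffice. The secondary, bookkeeping difficulty is to keep every bound linear in $\int_{J_u}|\nu'|\,d\Hm^2$: the perimeter of a disjoint union of balls is linear in the radii but the enclosed area is only quadratic, which is why slices with a large jump set must be treated separately and why the threshold $M$ (and hence $\eta(\Omega)$) has to be coupled to $e^{-T}$ and to the geometry of $S\Subset S'$, in analogy with the two-dimensional case.
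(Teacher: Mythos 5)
Your overall architecture (slice, extend in $x'$, split into good/bad slices via Chebyshev, run the ball construction per slice with a common cutoff time chosen by a mean-value argument in $t$, interpolate radially) is the right shape, and your bookkeeping for the volume, lateral perimeter and gradient bounds is consistent with the claimed linear-in-$\int_{J_u}|\nu'|\,d\Hm^2$ estimates. However, the proof has a genuine gap at exactly the point the paper singles out as the crux: you need, for a.e.\ $x_1$, an \emph{efficient} cover of the slice singular set ($\sum_i\rho_i^{x_1}\leq C\,\Hm^1(J_{u_{x_1}})$) that depends \emph{measurably} on $x_1$, and your proposed fix does not deliver it. Measurability of $\{x_1: Q\cap S_{U_{x_1}}\neq\emptyset\}$ for each dyadic cube $Q$ only tells you which cubes meet which slices; to get a cover whose total radius is comparable to $\Hm^1(J_{u_{x_1}})$ you must select cubes at varying scales through a stopping-time/Vitali-type argument driven by the fine geometry of each slice (a single-scale dyadic cover only controls box content, which can be far larger than $\Hm^1$), and it is precisely the measurable-in-$x_1$ execution of such a selection that is unproven. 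Everything downstream inherits this gap: the joint $(x_1,t)$-measurability of the evolved radii $\rho_i^{x_1,t}$ (needed for Tonelli to define $G$ and pick the common $t_0$), the measurability of $\omega=\bigcup_{x_1}\{x_1\}\times\omega_{x_1}$, and the measurability of the slicewise-defined $w$ are all asserted ``by construction'' but in fact rest on the unestablished measurable covering. A further, smaller omission: with uncountably many slices you also need a Fubini-type selection of $t_0$ guaranteeing that $U(x_1,\cdot)$ restricted to the circles $\partial B_i^{x_1,t_0}$ is $W^{1,p}$ (and $p$-integrable, to get $w\in L^p$) for a.e.\ good $x_1$ simultaneously; this requires adding the $|U|^p$-term and the trace condition to your mean-value argument, as the paper does when choosing $t_N$.

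The paper avoids the measurable-selection problem altogether by never covering slice by slice: after an anisotropic rescaling by $\delta=\bigl(\Hm^2(J_U)\bigr)^{-1}\int_{J_U}|\nu'|\,d\Hm^2$ it covers the full two-dimensional jump set in $\R^3$ by countably many balls (Vitali once, not per slice), replaces them by cylinders aligned with $x_1$, and runs the ball construction on the cylinder cross-sections. Then for each finite $N$ the evolved set $E^N_t$ is a finite union of cylinders, so measurability — including joint measurability in $(x_1,t)$ — comes for free from monotone limits, the initial radii satisfy $\int_I\sum_i r_{i,x_1}\,dx_1\lesssim\int_{J_U}|\nu'|\,d\Hm^2$ by construction, and $w$ is obtained as a weak $L^p$ limit of the finite-stage functions $w_N$ rather than by a direct slicewise definition. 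If you want to keep your slicewise route, you would have to supply a measurable-selection argument for efficient coverings of the slice singular sets (and propagate measurability through the countable ball construction), which is a substantial missing ingredient rather than a routine verification.
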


\begin{rem}
We note that the estimates \eqref{eq: est volume exceptional precise} and \eqref{eq: est surface exceptional precise} are optimal in the sense that one cannot bound $\mathcal{L}^3(\omega)$ or $\sup_{\varphi \in C^0_c(I\times S), |\varphi| \leq 1} \int_{\omega} \partial_i \varphi \, dx$ by $\left(\int_{J_u} |\nu'|\, d\Hm^2\right)^{\alpha}$ for some $\alpha > 1$. 
Indeed: Let $I=(-1,1)$, $S = B(0,1) \subseteq \R^2$ and $h>0$.
\begin{enumerate}
\item[(a)]  Consider the function $u \in \SBV^p(I\times S)$ given by 
\begin{equation}
u(x) = \begin{cases} 0 &\text{ if } x \in (-h/2,h/2) \times B(0,1/2) \\ 1 &\text{ else.}  \end{cases}
\end{equation}

Then $\nabla u = 0$ and $\int_{J_u} |\nu'| \, d\Hm^2 \leq C h$.
It follows that any function $w$ satisfying \eqref{eq: est nabla w} is constant on each slice $\{x_1\} \times S$.
Hence, the exceptional set $\omega$ satisfies $(-h/2,h/2) \times B(0,1/2) \subseteq \omega$ or $(-h/2,h/2) \times (B(0,1) \setminus B(0,1/2)) \subseteq \omega$. 
In particular, $\mathcal{L}^3(\omega) \geq c h$. This shows that \eqref{eq: est volume exceptional precise} is optimal.
\item[(b)] Now, consider the function $u \in \SBV^p(I \times S)$ defined by
\[
\begin{cases} 0 &\text{ if } x \in (-1,1) \times B(0,h) \\ 1 &\text{ else.}  \end{cases}
\] 

Again, $\nabla u = 0$, $\int_{J_u} |\nu'| \, d\Hm^2 \leq Ch$ and any $w$ satisfying \eqref{eq: est nabla w} is constant on each slice. 
If the exceptional set $\omega$ satisfies \eqref{eq: est volume exceptional precise} then we have for small $h$ necessarily that $w=1$. Hence, we have on almost every slice $\{x_1\} \times B(0,h) \subseteq \omega$. 
On the other hand, on at least half of the slices we have $\mathcal{L}^2(\{x' \in S: (x_1,x') \in \omega\} ) \leq Ch$.
This implies that $ \max_{i=2,3} \sup_{\varphi \in C^0_c(I\times S), |\varphi| \leq 1} \int_{\omega} \partial_i \varphi \, dx \geq c h$, which shows that \eqref{eq: est surface exceptional precise} is optimal.

\end{enumerate}
\end{rem}

\begin{rem}
Note that the $1+1$-dimensional version of Theorem \ref{theorem: Sobolev approximation 3d} is much simpler. 
For $\Omega \subseteq \R^2$ and $u \in \SBV^p(\Omega)$ we define $\omega := \{ (x_1,x_2) \in \Omega : (\{x_1\} \times \R) \cap J_u \neq \emptyset \}$. 
Then $u$ is absolutely continuous in the $x_2$ direction on $\Omega \setminus \omega$. Moreover, $\Lm^2(\omega) \leq \int_{J_u} |\nu_2|\, d\Hm^1$ and $\sup_{\varphi \in C^0_c(\Omega), |\varphi| \leq 1} \int_{\omega} \partial_2 \varphi \, dx =0$.
\end{rem}

\begin{proof}[Proof of Theorem \ref{theorem: Sobolev approximation 3d}]
We will argue as in the proof of Theorem \ref{theorem: Sobolev approximation} simultaneously in every slice $\{x_1\} \times S$ of $\Omega = I \times S$.
Again, we find an open bounded set $S' \subset \R^2$ with Lipschitz boundary such that $S \Subset S'$ and for any $u\in \GSBV^p(\Omega)$ there exists an extension $U\in \GSBV^p(I \times S')$ with $U = u$ in $\Omega$ and
\begin{align}
&\int_{I \times S'} |\nabla U|^p\,dx \leq C(\Omega) \int_{\Omega} |\nabla u|^p\,dx, \\
&\Hm^2(J_{U}) \leq C(\Omega) \Hm^2(J_u) \\ 
\text{ and } &\int_{J_U} |\nu'| \,d\Hm^2 \leq C(S) \int_{J_u} |\nu'| \,d\Hm^2.
\end{align}

Next, set $\delta:= \left( \Hm^2(J_U) \right)^{-1} \int_{J_U} |\nu'| \, d\Hm^2$ and define $U_{\delta}: I \times \delta S \to \R$ by $U_{\delta}(x_1,x_2,x_3) = U(x_1,x_2/\delta,x_3/\delta)$.
It follows that
\begin{align}
\Hm^2(J_{U_\delta}) = \int_{J_U} \left| ( \delta^2 \nu_1, \delta\nu_2, \delta\nu_3 ) \right| \, d\Hm^2 &\leq \delta^2 \Hm^2(J_U) + \delta \int_{J_U} |\nu'| \, d\Hm^2 \\ &= 2  \delta \int_{J_U} |\nu'| \, d\Hm^2.
\end{align}
 
We use Vitali's covering theorem to cover $J_{U_\delta}$ and obtain countably  many balls $\tilde{B}_i=B(\tilde{x}_i,\tilde{r}_i)$ such that $\sum_{i \in \N} \tilde{r}_i^2 \leq C \Hm^2(J_{U_\delta})$.
Let us then define the cylinders 
\[
\tilde{C}_i = ((\tilde{x}_i)_1-\tilde{r}_i,(\tilde{x}_i)_1+\tilde{r}_i) \times \pi\left( \tilde{B}_i \right),
\]
where $\pi: \R^3 \to \R^2$ is the projection along the $x_1$-variable.
Then we define the scaled cylinders 
\[
C_i = \{ x \in I \times S': (x_1,\delta x_2,\delta x_3) \in \tilde{C}_i \}.
\]

We denote for $x_1 \in I$ by 
\[
r_{i,x_1} = \frac{\tilde{r}_i}{\delta} \, \1_{\{s \in I: (\{s\} \times S) \cap C_i \neq \emptyset\}}(x_1)
\]
the radius corresponding to the cylinder $C_i$ if the slice $(\{x_1\} \times S')$ intersects $C_i$.
Then we estimate 
\begin{align}
2C\delta \int_{J_U}|\nu'| \, d\Hm^2 \geq 2\sum_{i} \tilde{r}_i^2 &\geq \int_I \sum_i \tilde{r}_i \, \1_{\{s \in I: (\{s\} \times S) \cap C_i \neq \emptyset\}}(x_1) \, dx_1 \\ &\geq 
\delta \, \int_I \sum_i r_{i,x_1} \, dx_1. \label{eq: estimate sum r_i}
\end{align}

Let us then write $h := \int_{J_U}|\nu'| \, d\Hm^2$.
Moreover, define the sets $I_{ad} = \{ x_1 \in I: \sum_i r_{i,x_1}  \leq \eta(\Omega)\}$ and $I_{ad}^N = \{ x_1 \in I: \sum_{i=1}^N r_{i,x_1}  \leq \eta(\Omega)  \}$, where $\eta(\Omega)>0$ will be fixed later.
It follows immediately that $|I_{ad}^N| \geq |I_{ad}| \geq |I| - C \frac{h}{\eta(\Omega)}$.

Now, we apply simultaneously for all $x_1 \in I$ Lemma \ref{lemma: balls} to the balls $B_{i,x_1} := \pi\left((\{x_1\} \times S') \cap C_i\right)$ in $S'$ to obtain a family of at most countably many balls $B_{i,x_1}^t$ with radii $r_{i,x_1}^t$ and collapse times $t_{i,x_1}$ satisfying (i) to (v) of Lemma \ref{lemma: balls}.
Let us then define for $t>0$ the set 
\[
E_t = \bigcup_{x_1 \in I_{ad}} \bigcup_i \{x_1\} \times \overline{B_{i,x_1}^t}.
\]

We claim that the set $E_t$ is measurable for all $t$. 
As in the proof of Lemma \ref{lemma: balls} we first start with the first $N$ cylinders and perform the construction from Lemma \ref{lemma: finite balls} in each slice $\{x_1\} \times S'$.
For an illustration see Figure \ref{fig: construction cylinders}.
We denote by $B_{i,x_1}^{t,N}\subseteq \R^2$ the corresponding balls with radius $r_{i,x_1}^{t,N}$ and centers $x_{i,x_1}^{t,N}$.
Moreover, we write $E_t^N \subseteq I \times S'$ for the corresponding set at time $t$.
Note that the set $E^N_t$ is by the very construction the union of finitely many cylinders (potentially more than at the beginning, see Figure \ref{fig: construction cylinders}).
We denote these cylinders at time $t$ by $C_{i}^{t,N}$ with radius $r_{i}^{t,N}$.

As in the proof of Lemma \ref{lemma: balls} the pointwise limit $\lim_{N\to\infty} \1_{E^N_t}$ exists by monotonicity for every $t\geq 0$ and is again an indicator function.
We call the corresponding measurable set $\tilde{E}_t \subseteq I \times S'$.
Again, as in the proof of Lemma \ref{lemma: balls} it holds in a pointwise almost everywhere sense $\lim_{s \searrow t} \1_{\tilde{E}_s} \1_{I_{ad} \times S'} = \1_{E_t}$.
In particular, $E_t$ is measurable.

\begin{figure}
\includegraphics[scale=0.7]{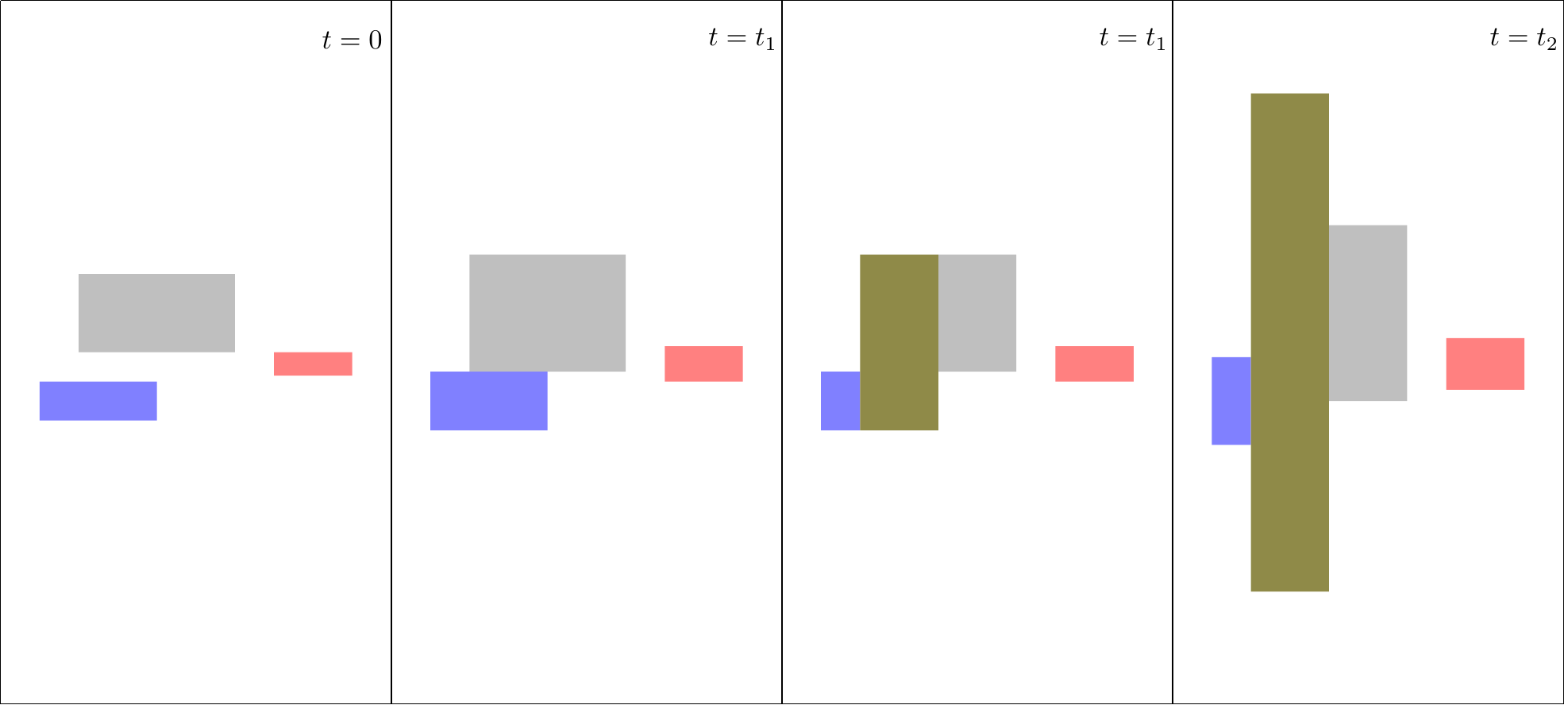}
\caption{A sketch of the construction in the $x_1$-$x_2$-plane, i.e. the cylinders appear to be rectangles. At time $t=0$ we start with finitely many cylinders (grey, blue, red). Until time $t_1$ in every slice $\{x_1\}\times S'$ all appearing balls expand, i.e., the corresponding cylinders grow in the $x_2$-$x_3$-direction and stay cylinders. At time $t_1$ in certain slices $\{x_1\} \times S'$ balls are merged. The corresponding set $E^N_{t_2}$ can again be decomposed into finitely many cylinders (grey, blue, red, yellow). Then these new cylinders continue to grow in the $x_2$-$x_3$-direction.  }
\label{fig: construction cylinders}
\end{figure}

Additionally, we estimate using Fubini, the isoperimetric inequality in dimension two, the definition of $I_{ad}$, estimate (ii) of Lemma \ref{lemma: balls} and estimate \eqref{eq: estimate sum r_i}
\begin{align}
\mathcal{L}^3(E_t) &= \int_I \mathcal{L}^2((E_t)_{x_1}) \, dx_1 \\
&\leq C\int_{I_{ad}} \Per\left((E_t)_{x_1}\right)^2 \, dx_1 \\
&\leq C\int_{I_{ad}} e^{2t} (\sum_i r_{i,x_1})^2 \, dx_1 \\
&\leq C \eta(\Omega) e^{2t} h. \label{eq: estimate volume omega}
\end{align}

Moreover, we see by Fubini that it holds for all $t \in (0,\infty)$, $\varphi \in C^{\infty}_c(\Omega)$ with $|\varphi|\leq 1$ and $i=2,3$
\begin{align}
\int_{E_t} \partial_i \varphi \, dx &\leq 2\pi\int_{I_{ad}} \sum_i r_{i,x_1}^t \, dx_1 \\
&\leq 2\pi\int_{I_{ad}} e^t \sum_i r_{i,x_1} \, dx_1 \\
&\leq 2\pi e^t h. \label{eq: estimate perimeter omega}
\end{align} 

Similar to the proof of Theorem \ref{theorem: Sobolev approximation}, let $T>0$ and $\eta(\Omega)>0$ be such that $C(S)e^T \eta(\Omega) \leq \frac12 \operatorname{dist}(S,\partial S')$. 
Then define $\omega = E_T \cup \left((I \setminus I_{ad}) \times S\right)$.  
We note that the estimates for $\mathcal{L}^3(\omega)$ and $\sup_{\varphi \in C_c^{\infty}, |\varphi| \leq 1} \int_{\omega} \partial_i \varphi \, dx$, $i=2,3$, follow from \eqref{eq: estimate volume omega}, \eqref{eq: estimate perimeter omega} and the fact that $|I \setminus I_{ad}| \leq C \frac{h}{\eta(\Omega)}$.

Let us now fix $N \in \N$.
Then we apply for each $x_1 \in I_{ad}$ Lemma \ref{lemma: balls Fubini} to the family of balls $(B_{i,x_1}^{t,N})_i$ and the function $f_{x_1} = |\nabla' U(x_1,\cdot,\cdot)|^p \1_{S'}$ to obtain
\[
\int_{I_{ad}} \int_0^{\infty} \sum_{i=1}^N r_{i,x_1}^{t,N} \int_{\partial B_{i,x_1}^{t,N} \cap S'} |\nabla' U|^p \, d\Hm^1 \, dt\, dx_1 \leq C(S) \int_{I \times S'} |\nabla' U|^p \, dx.
\]

Similarly, one obtains
\[
\int_{I_{ad}} \int_0^{\infty} \sum_{i=1}^N r_{i,x_1}^{t,N} \int_{\partial B_{i,x_1}^{t,N} \cap S'} |U|^p \, d\Hm^1 \, dt\, dx_1 \leq C(S) \int_{I \times S'} |U|^p \, dx.
\]

Moreover, we have by the coarea formula applied to cylindrical coordinates
\begin{align} 
\int_{0}^{\infty} \sum_i \Hm^1(J_U \cap \partial_{(2,3)} C_i^{t,N}) \, dt \leq \Hm^2(J_U \setminus \bigcup_{i=1}^N C_i),
\end{align}
where $\partial_{(2,3)} C_i^{t,N}$ denotes the part of the boundary of $C_i^{t,N}$ whose normal is in the $x_2$-$x_3$-plane.
Now find $t_N \in (0,T)$ such that $U$ is in $\SBV^p(\bigcup_{x_1 \in I} \{x_1\} \times \bigcup_i \partial B_{i,x_1}^{t_N,N})$ and
\begin{align} \label{eq: int I_{ad} T nabla w}
\int_{I_{ad}} \sum_i r_{i,x_1}^{t_N,N} \int_{\partial B_{i,x_1}^{t_N,N} \cap S'} |\nabla' U|^p \, d\Hm^1 \, dx_1 &\leq 4\frac{C(S)}T \int_{I \times S'} |\nabla' U|^p \, dx, \\
\label{eq: int I_{ad} T w}
\int_{I_{ad}} \sum_{i=1}^N r_{i,x_1}^{t_N,N} \int_{\partial B_{i,x_1}^{t_N,N} \cap S'} |U|^p \, d\Hm^1 \, dx_1 &\leq 4 \frac{C(S)}T \int_{I \times S'} |U|^p \, dx, \\
\sum_i \Hm^1(J_U \cap \partial_{(2,3)} C_i^{t_N,N}) &\leq \frac4T\Hm^2(J_U \setminus \bigcup_{i=1}^N C_i ). \label{eq: coarea}
\end{align}

By the definition of $\eta(\Omega)$ it follows for all $x_1 \in I_{ad}^N$ that $\partial B_{i,x_1}^{t_N} \subseteq S'$.
Also note that $I_{ad}^N$ is the union of finitely many intervals.

Now, we define $w_N: I \times S \to \R$ as follows.
We set $w_N = 0$ on $(I\setminus I_{ad}^N) \times S$ and $w_N=u$ in $(I_{ad}^N \times S) \setminus E^N_{t_N}$.
In order to define $w_N$ on $E_{t_N}^N \cap (I_{ad}^N\times S)$, we recall that the set $E_{t_N}^N$ consists of finitely many cylinders $C_{i}^{t_N,N}$ of the form $\bigcup_{x_1 \in J} \{x_1\} \times B_{i,x_1}^{t_N,N}$ for an interval $J$. Now fix $x_1 \in I\setminus I_{ad}^N$. 
Then we define for a point $y = (x_1,(1-\theta)x_{x_1,i}^{t_N,N} + \theta z) \in \{x_1\} \times B_{i,x_1}^{t_N,N}$ with $\theta \in (0,1)$ and $z \in \{0\} \times \partial B_{i,x_1}^{t_N,N}$ as in the proof of Theorem \ref{theorem: Sobolev approximation} in polar coordinates 
\[
w_N\left(x_1,(1-\theta)x_{i,x_1}^{t_N,N} + \theta z\right) = (1-\theta) \fint_{\partial B_{i,x_1}^{t_N,N}} U \, d\Hm^1 + \theta U(z).
\]

It follows immediately that $w_N$ is measurable.
By \eqref{eq: int I_{ad} T w} it follows
\begin{equation}
\int_{C_{i}^{t_N,N}} |w_N|^p \, dx \leq 4\frac{C(p)}T \int_J r_{i,x_1}^{t_N,N} \int_{\partial B_{i,x_1}^{t_N,N}} | U|^p \, d\Hm^1 \, dx_1.
\end{equation}

Consequently, we have
\begin{equation}
\int_{I \times S} |w_N|^p \, dx \leq \left( 4\frac{C(p)}T + 1 \right) \int_{I \times S'} |U|^p \, dx \leq C\left( 4\frac{C(p)}T + 1 \right) \int_{I\times S} |u|^p \, dx. \label{eq: estimate wN int}
\end{equation}

In particular, there exists $w \in L^p(I\times S)$ such that (up to a not relabeled subsequence) it holds $w_N \rightharpoonup w$ in $L^p$ satisfying
\[
\int_{I \times S} |w|^p \, dx \leq C \int_{I \times S} |u|^p \, dx.
\]

Moreover, it can be checked that $w_N \in \GSBV^p(I \times S)$.
By \eqref{eq: int I_{ad} T nabla w} and the definition of $w_N$ it holds
\begin{equation}
\int_{I \times S} |\nabla ' w_N|^p \, dx \leq C \int_{\Omega} |\nabla' u|^p \, dx \label{eq: est nabla wN}
\end{equation}

\begin{figure}
\includegraphics[scale=0.8]{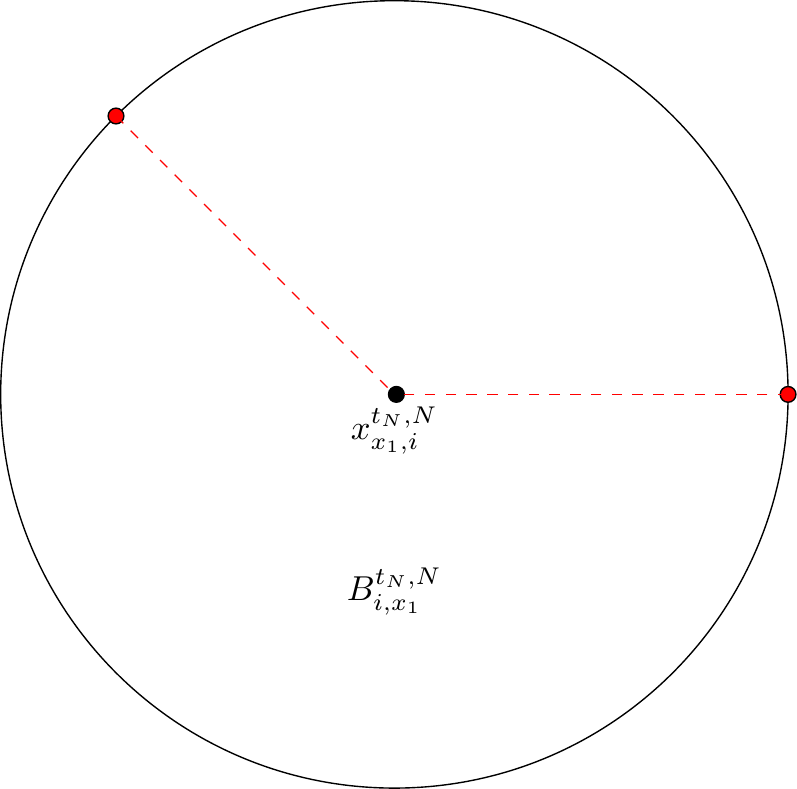}
\caption{Sketch of the jump set of the function $w_N$ in the $x_2$-$x_3$-plane. The red dots on the boundary of the ball $B_{i,x_1}^{t_N,N}$ indicate jumps of the function $U$. By the construction of $w_N$ these jumps are extended into $\{x_1\} \times B_{i,x_1}^{t_N,N}$.}
\label{fig: jump set w_N}
\end{figure}
Moreover, by \eqref{eq: coarea} (see Figure \ref{fig: jump set w_N})
\begin{align}
\int_{J_{w_N}} |\nu'| \, d\Hm^2 &\leq 4 \sum_i r_{i}^{t_N,N} \Hm^1(J_U \cap \partial_{(2,3)} C_i^{t_N,N}) \\
&\leq \frac4T \eta(\Omega) \, \Hm^2(J_U \setminus \bigcup_{i=1}^N C_i). \label{eq: est JwN}
\end{align}

Let us now define the set $A_N = \{ x \in I \times S: \{(x_1,x_2)\} \times \R \cap J_{w_N} \neq \emptyset \}$. 
By estimate \eqref{eq: est JwN} we have that $\mathcal{L}^3(A_N) \leq C \Hm^2(J_U \cap \bigcup_{i=1}^N C_i) \to 0$ as $N\to\infty$.
In particular, $w_N \1_{A_N} \rightharpoonup w$ in $L^p$.
On the other hand, by the definition of $A_N$ we have that $D_2 (w_N \1_{A_N})$ is absolutely continuous with respect to the Lebesgue measure and is by \eqref{eq: est nabla wN} uniformly bounded in $L^p$.
In particular, up to another not relabeled subsequence, there exists $f \in L^p$ such that $D_2(w_N \1_{A_N}) \rightharpoonup f$. 
It follows that $D_2 w = f$, in particular $D_2 w \in L^p$ and 
\[
\int_{I \times S} |D_2 w|^p \, dx \leq C \int_{I\times S} |\nabla' u|^p \, dx.
\]

Similarly one shows $D_3 w \in L^p$ and a corresponding estimate.
It remains to show that $w = u$ outside $\omega$.
By definition we have $w_N = u$ outside $\left(E^N_{t_N} \cap (I_{ad}^N \times S) \right)\cup (I\setminus I_{ad}^N) \times S$.
Note that $\1_{I_{ad}^N} \searrow \1_{I_{ad}}$ as $N\to\infty$.
Moreover, we have by monotonicity $E^N_{t_N} \cap (I_{ad} \times S) \subseteq \omega$.
Hence, $\limsup_{N\to\infty} \1_{\left(E^N_{t_N} \cap (I_{ad}^N \times S) \right)\cup (I\setminus I_{ad}^N) \times S} \leq \1_{\omega}$.
This shows that $w = u$ outside $\omega$.

\end{proof}

\section{Application to an anisotropic Mumford-Shah functional}\label{sec: Mumford Shah}

We now provide an application of Theorem \ref{theorem: Sobolev approximation 3d} to the following Mumford-Shah type functionals $F_\eps : \GSBV^p(I\times S) \to \R$,
\begin{equation}
F_\eps(u) := \int_{I\times S} \frac1\eps |\nabla' u|^p + |\partial_1 u |^p \,dx + \int_{J_u} \frac1\eps |\nu'| + |\nu_1|\,d\Hm^2,
\end{equation}
where $\nu \in S^2$ is the measure theoretic normal to $J_u$.

Here $u$ can be interpreted as a grayscale movie on a film of cross section $S\subset \R^2$, with $x_1\in I$, interpreted as time, and $I\subseteq\R$ is a bounded interval. In this setting, it appears natural to penalize changes in time and changes in space differently.

We show the following result concerning the asymptotics of $F_\eps$ as $\eps \to 0$:

\begin{theorem}\label{theorem: Gamma convergence}
As $\varepsilon \to 0$ the functionals $F_\eps$ $\Gamma$-converge compactly under convergence in measure to the functional $F_0:L^1(I\times S)\to [0,\infty]$,
\begin{equation}
F_0(u) = \begin{cases} \Lm^2(S) \left(\int_I |u'|^p\,dx_1 + \#J_u\right) &\text{ if }u(x) = u(x_1)\in \SBV^p(I)\\
    \infty & \text{ otherwise.}
\end{cases}
\end{equation}

More precisely:

    \begin{itemize}
\item [(i)] Let $u_\eps\in \GSBV^p(I \times S)$ with $\sup_{\eps>0} F_\eps(u_\eps) < \infty$.

Then there exist functions $w_\eps \in \SBV^p_\loc(I)$ such that $u_\eps - w_\eps \to 0$ in measure, with
\begin{equation}
\limsup_{\eps \to 0}   \#J_{w_\eps} \leq  \limsup_{\eps \to 0} \frac1{\Lm^2(S)} \int_{J_{u_\eps}} |\nu_1|\,d\Hm^2
\end{equation}
and
\begin{equation}
\liminf_{\eps \to 0} F_0(w_\eps) \leq \liminf_{\eps \to 0} F_\eps(u_\eps).
\end{equation}

In particular, there exists a sequence of piecewise constant functions $c_{\varepsilon} \in \SBV^p_\loc(I)$, functions $w \in \SBV_{loc}^p(I)$ and a (not relabed) subsequence such that $u_{\varepsilon} - c_{\varepsilon} \to w$  in measure and 
\[
F_0(w) \leq \liminf_{\eps \to 0} F_{\eps} (w_{\eps} - c_{\eps}) \leq \liminf_{\eps \to 0} F_{\eps}(u_{\eps}).
\]

\item [(ii)] For any $u\in \SBV^p(I)$ and any sequence $u_\eps\in \SBV^p(I\times S)$ such that $u_\eps(x) \to u(x_1)$ in measure we have $F_0(u) \leq \liminf_{\eps \to 0} F_\eps(u_\eps)$.

\item [(iii)] For any $u\in \SBV^p(I)$ we have $F_\eps(u) = F_0(u)$.

\end{itemize}
\end{theorem}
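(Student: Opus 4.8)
The statement splits into three parts of unequal difficulty. Parts (ii) and (iii) are the easy directions: (iii) is a direct computation, since for $u\in\SBV^p(I)$ viewed as a function on $I\times S$ one has $\nabla'u=0$ and $\nu'=0$ on $J_u=J_u\times S$, so $F_\eps(u)=\Lm^2(S)(\int_I|u'|^p\,dx_1+\#J_u)=F_0(u)$ for every $\eps$. For (ii), the liminf inequality, I would argue by slicing: fix a recovery-type sequence $u_\eps\to u(x_1)$ in measure with $\liminf F_\eps(u_\eps)<\infty$ (otherwise nothing to prove). Along a subsequence realizing the liminf, the $\frac1\eps$-weighted terms force $\int_{I\times S}|\nabla'u_\eps|^p\,dx\to0$ and $\int_{J_{u_\eps}}|\nu'|\,d\Hm^2\to0$; meanwhile $\int|\partial_1u_\eps|^p\,dx$ and $\int_{J_{u_\eps}}|\nu_1|\,d\Hm^2$ stay bounded. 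Then I apply Proposition \ref{prop: poincareni} to get, outside an exceptional set $\omega_\eps$ of vanishing volume, closeness of $u_\eps$ to a function $a_\eps(x_1)$; since $u_\eps\to u(x_1)$ in measure, $a_\eps\to u$ in measure on $I$, and one passes to the limit in the one-dimensional Mumford--Shah functional of $a_\eps$ using lower semicontinuity of $\SBV^p(I)$, Fubini to relate $\int_{I\times S}|\partial_1u_\eps|^p$ to $\Lm^2(S)\int_I|a_\eps'|^p$ plus an error controlled by $\int|\partial_1(u_\eps-a_\eps)|^p$, and a careful counting of jumps of $a_\eps$ against $\frac1{\Lm^2(S)}\int_{J_{u_\eps}}|\nu_1|\,d\Hm^2$.

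\textbf{The main work is part (i), the compactness statement.} Given $u_\eps$ with $\sup_\eps F_\eps(u_\eps)<\infty$, I would first record the consequences: $\int_{I\times S}|\nabla'u_\eps|^p\,dx\leq\eps F_\eps(u_\eps)\to0$ and $\int_{J_{u_\eps}}|\nu'|\,d\Hm^2\leq\eps F_\eps(u_\eps)\to0$, while $\int_{I\times S}|\partial_1u_\eps|^p\,dx$ and $\int_{J_{u_\eps}}|\nu_1|\,d\Hm^2$ remain bounded by $F_\eps(u_\eps)$. For $\eps$ small enough, $\int_{J_{u_\eps}}|\nu'|\,d\Hm^2\leq\eta(\Omega)$, so Proposition \ref{prop: poincareni} applies: there is a measurable $\omega_\eps\subseteq\R^3$ and $a_\eps:I\to\R$ with $\int_{\Omega\setminus\omega_\eps}|u_\eps-a_\eps(x_1)|^p\,dx\leq C\int_\Omega|\nabla'u_\eps|^p\,dx\to0$, and $\Lm^3(\omega_\eps)\leq C\int_{J_{u_\eps}}|\nu'|\,d\Hm^2\to0$. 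Hence $u_\eps-a_\eps\to0$ in measure on $\Omega$. It remains to upgrade $a_\eps$ to a genuine $\SBV^p_\loc(I)$ function with the claimed bounds. For this I would project/average: set $w_\eps(x_1):=\fint_S w^{(\eps)}(x_1,x')\,dx'$ where $w^{(\eps)}\in L^p$ is the Sobolev-type replacement of $u_\eps$ furnished by Theorem \ref{theorem: Sobolev approximation 3d} (which satisfies $w^{(\eps)}=u_\eps$ off $\omega_\eps$ and $\int_\Omega|\nabla'w^{(\eps)}|^p\leq C\int_\Omega|\nabla'u_\eps|^p$). Actually it is cleaner to work with the \emph{slicing in the $x_1$ direction}: the finiteness of $\int_{J_{u_\eps}}|\nu_1|\,d\Hm^2$ together with $\int|\partial_1u_\eps|^p<\infty$ controls the one-dimensional structure of $u_\eps$ along almost every line parallel to $e_1$, and after the $x'$-averaging the jump set of $w_\eps$ has $\#J_{w_\eps}\leq\frac1{\Lm^2(S)}\int_{J_{u_\eps}}|\nu_1|\,d\Hm^2$ up to the $o(1)$ error coming from $\omega_\eps$, giving the first displayed inequality of (i); the energy inequality $\liminf F_0(w_\eps)\leq\liminf F_\eps(u_\eps)$ then follows from Fubini and the two bounds just described, since $\Lm^2(S)\int_I|w_\eps'|^p\leq\int_\Omega|\partial_1w^{(\eps)}|^p\leq(1+o(1))\int_\Omega|\partial_1u_\eps|^p$ by Jensen and the replacement estimate. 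Finally, to extract the convergent subsequence, subtract the piecewise-constant part: write $w_\eps=c_\eps+\tilde w_\eps$ where $c_\eps$ collects the (boundedly many, by the jump bound) jumps of $w_\eps$ so that $\tilde w_\eps\in W^{1,p}(I)$ with $\int_I|\tilde w_\eps'|^p$ bounded; normalizing $\tilde w_\eps$ to have zero average and using Rellich in $W^{1,p}(I)\hookrightarrow\hookrightarrow L^p(I)$ yields a subsequence $\tilde w_\eps\to w$ in $L^p(I)$, hence $u_\eps-c_\eps\to w$ in measure, and lower semicontinuity gives $F_0(w)\leq\liminf F_0(w_\eps)\leq\liminf F_\eps(u_\eps)$.

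\textbf{Anticipated obstacle.} The delicate point is the jump-counting: translating the surface integral $\int_{J_{u_\eps}}|\nu_1|\,d\Hm^2$ — which a priori only controls a \emph{measure} of the jump set, not a \emph{count} — into a bound on the \emph{number} of jumps $\#J_{w_\eps}$ of the one-dimensional averaged function. The mechanism is that after slicing in $x_1$, on a set of $x'$ of positive measure each jump of $w_\eps$ corresponds to a piece of $J_{u_\eps}$ crossing that line, and the coarea/Fubini identity $\int_{J_{u_\eps}}|\nu_1|\,d\Hm^2=\int_S\#\big(J_{u_\eps}\cap(I\times\{x'\})\big)\,dx'$ converts the integral into an average number of crossings; one then has to check that averaging in $x'$ does not \emph{create} jumps and does not \emph{merge} distinct jump levels in a way that loses the count — this is where the smallness of $\Lm^3(\omega_\eps)$ and a limsup argument (discarding the exceptional slices) are essential, and where one must be careful that the inequality only holds in the limit $\eps\to0$, matching the $\limsup\leq\limsup$ form of the statement. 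A secondary technical nuisance is ensuring all the objects ($\omega_\eps$, $w^{(\eps)}$, $w_\eps$, $c_\eps$) are genuinely measurable and that the various weak-$L^p$ limits identify the right derivatives, but this is handled exactly as in the proof of Theorem \ref{theorem: Sobolev approximation 3d precise}.
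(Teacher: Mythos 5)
Parts (ii) and (iii) of your proposal are essentially fine (for (ii) the paper in fact argues more simply: for fixed $\eps_0$ one has $F_\eps\geq F_{\eps_0}$ when $\eps\leq\eps_0$, and one uses truncation, lower semicontinuity of $F_{\eps_0}$, and the rigidity that $\nabla'u=0$ together with $\int_{J_u}|\nu'|\,d\Hm^2=0$ forces $u=u(x_1)$; your slicing route is more laborious but not the issue). The genuine gap is in the compactness part (i). Your construction $w_\eps(x_1):=\fint_S w^{(\eps)}(x_1,x')\,dx'$ cannot deliver the claimed bounds. First, Theorem \ref{theorem: Sobolev approximation 3d} controls only $\nabla' w^{(\eps)}$; it gives no estimate at all on $\partial_1 w^{(\eps)}$ (inside $\omega_\eps$ the replacement is built slice by slice by planar interpolation, and its dependence on $x_1$ is uncontrolled), so the inequality $\int_\Omega|\partial_1 w^{(\eps)}|^p\leq(1+o(1))\int_\Omega|\partial_1 u_\eps|^p$ which you invoke ``by Jensen and the replacement estimate'' is unjustified. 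Second, and more fundamentally, averaging in $x'$ does not preserve the $\SBV$ structure quantitatively: the jumps of the slices $x_1\mapsto u_\eps(x_1,x')$ sit at $x'$-dependent locations, so after averaging they are smeared into absolutely continuous (or Cantor) variation; this simultaneously destroys the jump count bound $\#J_{w_\eps}\leq\tfrac{1}{\Lm^2(S)}\int_{J_{u_\eps}}|\nu_1|\,d\Hm^2$ and inflates $\int_I|w_\eps'|^p$, since a smeared jump carries large $p$-energy. You explicitly name this as the ``anticipated obstacle,'' but the coarea identity $\int_{J_{u_\eps}}|\nu_1|\,d\Hm^2=\int_S\#\bigl(J_{u_\eps}\cap(I\times\{x'\})\bigr)\,dx'$ only controls the average number of jumps per slice, not the jumps of the averaged function, and you offer no mechanism that resolves the obstacle.

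The missing idea, which is how the paper closes exactly this gap, is to avoid averaging altogether: by Markov's inequality applied to the sliced quantities (gradient energy, jump count, distance to $a_\eps$, size of $\omega_\eps^{x'}$) one selects two individual good slices, $b_\eps=u_\eps(\cdot,x')$ with $\int_I|b_\eps'|^p\leq\tfrac{1+\delta}{\Lm^2(S)}\int_{I\times S}|\partial_1u_\eps|^p\,dx$ but possibly many jumps, and $d_\eps=u_\eps(\cdot,y')$ with $\#J_{d_\eps}\leq\tfrac{1+\delta}{\Lm^2(S)}\int_{J_{u_\eps}}|\nu_1|\,d\Hm^2$ but possibly large gradient, both close to the common profile $a_\eps$ outside small sets by Proposition \ref{prop: poincareni}. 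The one-dimensional jump removal lemma (Lemma \ref{lemma: jump removal}) is then the key tool that merges the two, producing $w_\eps$ that inherits the gradient bound from $b_\eps$ and the jump count from $d_\eps$ while staying close to $b_\eps$ (hence to $u_\eps$) in measure. This combination step has no counterpart in your proposal, and without it, or a working substitute, neither displayed inequality of (i) is established. Your final extraction (subtracting the piecewise constant $c_\eps$, Poincar\'e, compactness, lower semicontinuity) matches the paper, but it is conditional on first having such a $w_\eps$.
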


Note that a function $u\in GBSV^p(I\times S)$ with $\nabla'u = 0$ and $\int_{J_u} |\nu'| \,d\Hm^2 = 0$ is a function only of $x_1$ almost everywhere, for a proof see for example \cite{AmFuPa}.
Together with the the lower-semicontinuity of all $F_\eps$ this guarantees (ii). Indeed, it holds for all $M > 0$ and $\eps_0 > 0$ that the sequence $v_{\eps} = \max\{ \min\{u_{\eps},M\} u_{\eps}, -M\}$ converges in $L^1$ to $v = \max\{ \min\{u,M\}, -M\}$ and that 
\[
\liminf_{\eps \to 0} F_{\eps}(u_{\eps}) \geq \liminf_{\eps \to 0} F_{\eps_0}(v_{\eps}) \geq F_{\eps_0}(v).
\]

Sending $M \to \infty$ yields $\liminf_{\eps \to 0} F_{\eps}(u_{\eps}) \geq F_{\eps_0}(u)$. Then (ii) follows by sending $\eps_0 \to 0$.
Moreover, (iii) is follows immediately.

Hence, the only difficulty lies in the compactness statement (i), for which no analogues exist in the literature. To show (i), we will have to quantify exactly where a function with bounded $F_\eps$ differs from a function of only $x_1$.

For the proof, the following lemma states that we may remove jumps from a $\SBV^p$ function on an interval if it is close to a different $\SBV^p$ function with fewer jumps.

\begin{lemma}\label{lemma: jump removal}
Let $I\subseteq \R$ be an interval, $p\in (1,\infty)$. Let $u,v\in \SBV^p(I)$ with $\# J_u = N \leq M = \#J_v$. Let $\delta>0$ and $A\subseteq I$ be a Borel set with $\Lm^1(A)\leq \delta$. Then there is a function $w\in \SBV^p(I)$ and a set $\tilde A$ with $\#J_w \leq N$,
\begin{equation}
\int_I |w'|^p\,dx \leq \int_I |v'|^p\,dx,
\end{equation}
\begin{equation}\label{eq: w-v}
    \begin{aligned}
\|w-v\|_{L^\infty(I\setminus \tilde A)} \leq C(M)\bigg( & \|u-v\|_{L^p(I\setminus A)}^{1/p'}(\|u'\|_{L^p} + \|v'\|_{L^p})^{1/p}\\
& + \Lm^1(A)^{1/p'}(\|u'\|_{L^p} + \|v'\|_{L^p}) \bigg),
    \end{aligned}
\end{equation}
and 
\begin{equation}\label{eq: size tilde A}
\Lm^1(\tilde A) \leq C(M)\bigg( \Lm^1(A)+ \frac{\|u-v\|_{L^p(I\setminus A)}}{\|u'\|_{L^p}+\|v'\|_{L^p}} \bigg).
\end{equation}
\end{lemma}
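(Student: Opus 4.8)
The plan is to leave the absolutely continuous part of $v$ untouched: the function $w$ will satisfy $w'=v'$ almost everywhere, so that $\int_I|w'|^p\,dx=\int_I|v'|^p\,dx$ automatically, and $w$ will be obtained from $v$ by subtracting finitely many piecewise constant (step) functions, each subtraction annihilating at least one jump. If $N\ge M$ we simply take $w=v$, $\tilde A=\emptyset$, so assume $N<M$. Write $E:=\|u-v\|_{L^p(I\setminus A)}$, $G:=\|u'\|_{L^p}+\|v'\|_{L^p}$, $\alpha:=\Lm^1(A)$ (we may assume $E,G>0$; the cases $E=0$ and $G=0$ follow by inspection), and introduce the threshold $\lambda:=E^{1/p'}G^{1/p}$ together with the bad set
\[
B:=A\cup\{x\in I:\ |u(x)-v(x)|>\lambda\}.
\]
By Chebyshev's inequality and the arithmetic of conjugate exponents, $\Lm^1(B)\le\alpha+(E/\lambda)^p=\alpha+E/G=:\mu$. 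Finally set $\Lambda:=4G\mu^{1/p'}$; since $t\mapsto t^{1/p'}$ is subadditive and $(E/G)^{1/p'}=\lambda/G$, one gets $\Lambda\le 4\bigl(\alpha^{1/p'}G+E^{1/p'}G^{1/p}\bigr)$, which is precisely the scale occurring in \eqref{eq: w-v}. We shall use that on any connected component of $I\setminus(J_u\cup J_v)$ the functions $u$, $v$, hence $g:=u-v$, agree with $W^{1,p}$ functions, so on such a component of length $\ell$ one has $\operatorname{osc}(g)\le\ell^{1/p'}G$.

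The key step is the removal of the \emph{large} jumps. Let $s\in J_v\setminus J_u$ with $|[v](s)|>4\Lambda$. Since $u$ is approximately continuous at $s$ we have $[g](s)=[v](s)$, so one of the one-sided traces of $g$ at $s$ has modulus $\ge 2\Lambda$; say it is $g(s^+)$. Let $q$ be the first point of $J_u\cup J_v$ strictly to the right of $s$. If $|g|>\lambda$ throughout $(s,q)$ then $(s,q)\subseteq B$. Otherwise, by continuity of $g$ on $(s,q)$, $g$ decreases by at least $2\Lambda-\lambda\ge\Lambda$ along a subinterval of $(s,q)\cap\{|g|>\lambda\}\subseteq B$, and the oscillation bound forces $\Lm^1(B)\ge(\Lambda/G)^{p'}=4^{p'}\mu>\mu$, a contradiction; hence again $(s,q)\subseteq B$. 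We now kill the jump at $s$ by the local replacement $v\mapsto v-[v](s)\,\1_{(s,q)}$: this changes $v$ only inside $(s,q)\subseteq B$, leaves $v'$ unchanged, and either strictly decreases the total number of jumps (if $q\in J_v$) or merely relocates the jump at $s$ onto the point $q\in J_u$ while strictly decreasing the number of jumps lying outside $J_u$ (if $q\in J_u\setminus J_v$). Iterating — each step modifying the current function only inside $B$, and the process terminating after finitely many steps since $\#J_v+\#(J_v\setminus J_u)$ strictly decreases — we obtain $v_\ast$ with $v_\ast=v$ on $I\setminus B$, $v_\ast'=v'$ a.e., and every jump of $v_\ast$ not lying in $J_u$ of size $\le 4\Lambda$.

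If $\#J_{v_\ast}\le N$ we are finished with $w:=v_\ast$, $\tilde A:=B$. Otherwise $v_\ast$ possesses a jump outside $J_u$ (at most $N$ of its more than $N$ jumps can lie in $J_u$), and we remove all of them at once: put $w:=v_\ast-P$, where $P:=\sum_{s\in J_{v_\ast}\setminus J_u}[v_\ast](s)\,\1_{(s,\infty)}+c$ and the constant $c$ is chosen so that $\min_I|P|=0$. Then $w'=v_\ast'=v'$ a.e., $\#J_w\le\#(J_{v_\ast}\cap J_u)\le N$, and since $P$ has at most $M$ jumps each of size $\le 4\Lambda$ its oscillation — and therefore $\|P\|_{L^\infty(I)}$ — is at most $4M\Lambda$. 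Taking $\tilde A:=B\supseteq A$ we obtain $\Lm^1(\tilde A)\le\mu=\alpha+E/G$ and, on $I\setminus\tilde A=I\setminus B$ where $v_\ast=v$,
\[
\|w-v\|_{L^\infty(I\setminus\tilde A)}=\|P\|_{L^\infty(I\setminus B)}\le 4M\Lambda\le C(M)\bigl(E^{1/p'}G^{1/p}+\alpha^{1/p'}G\bigr),
\]
which are the two required estimates, with $C(M)$ linear in $M$.

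The step I expect to be the main obstacle is precisely the large-jump removal, i.e.\ turning the Chebyshev/oscillation heuristic into the rigorous assertion that every sufficiently large jump of $v$ outside $J_u$ can be deleted or relocated by a step-function subtraction supported inside $B$. The careful bookkeeping is what costs effort: one must handle the interaction with the (at most $N$) jumps of $u$, which can lie between such a jump and its neighbour and a priori destroy the locality, and one must verify that the jump count genuinely falls to $\le N$ after finitely many iterations, since the relocation operations may require re-running the argument. The remaining ingredients — the exponent arithmetic, the one-dimensional Sobolev oscillation estimate, and the elementary bound $\|P\|_{L^\infty}\le(\text{number of jumps})\times(\text{maximal jump size})$ — are routine.
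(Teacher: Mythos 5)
Your argument is correct, but it is genuinely different from the paper's proof. The paper reduces to the case $\#J_u=0$ (by working between consecutive jumps of $u$), discards into $\tilde A$ all subintervals between consecutive jumps of $v$ of length below a parameter $l$, proves an $L^\infty$ bound for $u-v$ on the long subintervals by selecting a good point via a mean-value argument and using the $C^{0,1/p'}$ embedding, then defines $w$ as the continuous primitive $w(x)=v(x_0)+\int_{x_0}^x v'$ and controls the accumulated jumps of $v$ across the discarded short intervals by comparison with the jump-free $u$; finally it optimizes over $l$, which is where the two terms in \eqref{eq: w-v} arise. You instead keep $w'=v'$ exactly and remove jumps by subtracting step functions: a Chebyshev bad set $B=A\cup\{|u-v|>\lambda\}$ of measure $\le \alpha+E/G$, an oscillation-versus-measure contradiction showing that any jump of the current function outside $J_u$ of size $>4\Lambda$ is screened by $B$ up to the next point of $J_u\cup J_v$ (so it can be cancelled by a step function supported in $B$, with a decreasing counter $\#J+\#(J\setminus J_u)$ guaranteeing termination), and then one global step function $P$ of amplitude $\le 4M\Lambda$ removing the remaining small jumps. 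Your route buys a bit more than the statement asks: $J_w\subseteq J_u$, exact equality $\int|w'|^p=\int|v'|^p$, and a bound on $\Lm^1(\tilde A)$ without the factor $C(M)$; it also treats general $N$ directly, where the paper's reduction "assume $N=0$ by restricting to a smaller interval" is left terse. The paper's route is shorter and avoids any iteration, with $\tilde A$ a finite union of intervals. The only points you should still write out, none of them a real gap: the symmetric case where it is the left trace $g(s^-)$ that exceeds $2\Lambda$ (modify on $(q',s)$ with $q'$ the preceding point of $J_u\cup J_v$); the inductive facts that every intermediate function $v_k$ satisfies $J_{v_k}\subseteq J_u\cup J_v$ and $v_k=v$ off $B$, which is what lets you rerun the screening argument with $g_k=u-v_k$; and the degenerate cases $G=0$ or $\mu=0$ (where $\tilde A=I$, respectively $u=v$ a.e., settle the claim), which you correctly flag as routine.
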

\begin{rem}
Note that if $u$ and $v$ are piecewise constant but $u \neq v$ then $\tilde{A} = I$ is consistent with the estimates of the lemma.
\end{rem}
\begin{proof}
Assume first that $N=0$, by restricting to a smaller interval if necessary. Then $u\in W^{1,p}_\loc(I)$.

Let $J_v = \{t_1,\ldots,t_M\}$ with $t_i \leq t_{i+1}$. Define $t_0 := \inf I$, $t_{M+1} := \sup I$. Define the subintervals $I_i := (t_i,t_{i+1})$.

Let $l \geq 2 \Lm^1(A)$. Let $\tilde A$ be the union of all subintervals of length less than $l$. Then $\Lm^1(\tilde A) \leq (M+1)l$. On all other subintervals $I_i$ we also have $v\in W^{1,p}_\loc(I_i)$.

If $x\in I_i$ and $\Lm^1(I_i) \geq l$, there exists a point $y\in B(x,l) \cap I_i \setminus A$ such that
\begin{equation}\label{eq: u-v}
    |u(y) - v(y)|^p \leq \frac2l \int_{I_i\setminus A} |u-v|^p\,dz.
\end{equation}

Also by the Sobolev empedding theorem both $u$ and $v$ are in $C^{0,1/p'}(I_i)$ and
\begin{equation}\label{eq: differences}
|u(x) - u(y)|^p + |v(x) - v(y)|^p \leq l^{p/p'}\int_{I_i} |u'|^p + |v'|^p\,dz.
\end{equation}

Combining \eqref{eq: u-v} and \eqref{eq: differences} yields
\begin{equation}
\|u-v\|_{L^\infty(I\setminus \tilde A)} \lesssim l^{-1/p} \|u-v\|_{L^p(I\setminus A)} + l^{1/p'} (\|u'\|_{p} + \|v'\|_{p}).
\end{equation}

Optimizing $l$ over $[2\Lm^1(A),\infty)$ yields
\begin{equation}
l := \max \left\{\frac{\|u-v\|_{L^p(I\setminus A)}}{\|u'\|_{L^p} + \|v'\|_{L^p}}, 2\Lm^1(A) \right\},
\end{equation}
whereupon we obtain \eqref{eq: size tilde A} and 
\begin{equation} \label{eq: Linfty estimate}
\|u-v\|_{L^\infty(I \setminus \tilde{A})} \lesssim \max \left\{\|u-v\|_{L^p(I\setminus A)}^{1/p'}(\|u'\|_{L^p} + \|v'\|_{L^p})^{1/p}, \Lm^1(A)^{1/p'}(\|u'\|_{L^p} + \|v'\|_{L^p}) \right\}.
\end{equation}

We now define the function $w$ in $I\setminus \tilde A$ as follows: Pick a point $x_0\in I\setminus \tilde A$ and define
\begin{equation}
w(x) := v(x_0) + \int_{x_0}^x v'(z)\,dz.
\end{equation}

It is clear that $w\in W^{1,p}_\loc(I)$ with $\|w'\|_{L^p} = \|v'\|_{L^p}$. To see that $w$ is close to $v$ in $L^\infty(I\setminus \tilde A)$, we bound the jumps of $v$ between intervals. It is clear that on the interval $I_{i_0}\subset I\setminus \tilde A$ containing $x_0$, we have $w=v$ on $I_{i_0}$. Let $I_i = (t_i,t_{i+1})$ and $I_{i+k} = (t_{i+k},t_{i+k+1})$ be two consecutive intervals contained in $I\setminus \tilde A$. Since $1 \leq k \leq M$ and each interval $I_{i+j}$, $1\leq j \leq k-1$, has length at most $l$, we have $|t_{i+k} - t_{i+1}| \leq Ml$. By the triangle inequality and the H\"older-continuity of $u$ we have
\begin{equation}
|v(t_{i+k}) - v(t_{i+1})| \leq 2 \|u-v\|_{L^\infty(I\setminus \tilde A)} + \|u'\|_{L^p} (Ml)^{1/p'}.
\end{equation}

This happpens at most $M$ times, thus
\begin{equation}
\|w-v\|_{L^\infty(I\setminus \tilde A)} \leq C(M) \|u-v\|_{L^\infty(I\setminus \tilde A)} + C(M) \|u'\|_{L^p} l^{1/p'}.
\end{equation}

Then \eqref{eq: w-v} follows from \eqref{eq: Linfty estimate} and the defition of $l$.

\end{proof}

We now turn to the proof of Theorem \ref{theorem: Gamma convergence} (i). Note that it is impossible to replace the $\limsup$ by a $\liminf$, since different subsequences may minimize different parts of the energy.

\begin{proof}[Proof of Theorem \ref{theorem: Gamma convergence} (i)]

For this proof, we wish to apply Lemma \ref{lemma: jump removal} to $u_\eps$ restricted to two slices $x'=\mathrm{const} \in S$.

Before we continue, let
\begin{equation}
M := \sup_{\eps > 0} \frac{1}{\Lm^2(S)}F_\eps(u_\eps), 
\end{equation}
\begin{equation}
N := \left\lfloor \limsup_{\eps \to 0}\frac{1}{\Lm^2(S)}\int_{J_{u_\eps}} |\nu_1|\,d\Hm^2 \right\rfloor. 
\end{equation}

Extract a subsequence $\eps \to 0$ (not relabeled) such that
\begin{equation}
\lim_{\eps \to 0}\int_{I\times S} |\partial_1 u_\eps|^p\,dx = \liminf_{\eps \to 0} \int_{I\times S} |\partial_1 u_\eps|^p\,dx.
\end{equation}

Then apply Proposition \ref{prop: poincareni} to each function $u_\eps$, yielding sets $\omega_\eps \subset I\times S$ and measurable functions $a_\eps: I \to \R$ such that
\begin{equation}\label{eq: measure omegaeps}
\Lm^3(\omega_\eps) \leq C(p,S,I)M\eps
\end{equation}
and
\begin{equation} \label{eq: diff ueps aeps}
\int_{(I\times S) \setminus \omega_\eps} |u_\eps - a_\eps|^p\,dx \leq C(p,S,I) M \eps.
\end{equation}

We will not need to use the boundary estimate \eqref{eq: est surface exceptional} from Theorem \ref{theorem: Sobolev approximation 3d}.

Next, we define for $\lambda \geq 1$ four subsets of $S$:
\begin{align}
A_\eps^\lambda &:= \left\{ x' \in S\,:\, \int_I |\partial_1 u_\eps(x_1,x')|^p\,dx_1 \leq \frac{\lambda}{\Lm^2(S)} \int_{I\times S} |\partial_1 u_\eps(x)|^p\,dx\right\}, \\
    B_\eps^\lambda &:= \left\{ x' \in S\,:\, u_{\eps}^{x'} = u_{\eps}(\cdot,x') \in SBV^p(I) \text{ and } \#(J_{u^{x'}_\eps}) \leq \frac{\lambda}{\Lm^2(S)} \int_{J_{u_\eps}} |\nu_1|\,d\Hm^2\right\}, \\
    C_\eps^\lambda &:= \left\{ x' \in S\,:\, \int_{I\setminus \omega_\eps^{x'}} |u_\eps(x_1,x') - a_\eps(x_1)|^p\,dx_1 \leq \frac{\lambda}{\Lm^2(S)} \int_{(I\times S)\setminus \omega_\eps} |u_\eps(x) -a_\eps(x_1)|^p \,dx\right\}, \\
    D_\eps^\lambda &:= \left\{ x' \in S\,:\, \Lm^1(\omega_\eps^{x'}) \leq \frac{\lambda}{\Lm^2(S)} \Lm^3(\omega_\eps)\right\},
\end{align}
where, for $x'\in S$, $\omega_\eps^{x'} \subset I$ is the slice
\begin{equation}
\omega_\eps^{x'} := \{x_1\in I\,:\,(x_1,x')\in \omega_\eps\}.
\end{equation}

By Markov's inequality and standard slicing properties of $\BV$-functions we have
\begin{equation}
\min\left\{\Lm^2(A_\eps^\lambda),\Lm^2(B_\eps^\lambda),\Lm^2(C_\eps^\lambda),\Lm^2(D_\eps^\lambda)\right\} \geq \frac{\lambda -1}{\lambda} \Lm^2(S).
\end{equation}

Thus for every $\delta \in (0,1)$, we have
\begin{equation}
\Lm^2(A_\eps^{1+\delta} \cap B_\eps^{8/\delta} \cap C_\eps^{8/\delta} \cap D_\eps^{8/\delta}) \geq \frac{\delta}{8}\Lm^2(S).
\end{equation}

Consequently, we may find $x'\in A_\eps^{1+\delta} \cap B_\eps^{8/\delta} \cap C_\eps^{8/\delta} \cap D_\eps^{8/\delta}$ and define $b_\eps(x_1):= u_\eps(x_1,x')\in \SBV^p(I)$, noting that from the properties of the four sets we have
\begin{equation}
    \begin{aligned}
        \int_I |b_\eps'(x_1)|^p\,dx_1 \leq &\frac{1+\delta}{\Lm^2(S)} \int_{I\times S} |\partial_1 u_\eps(x)|^p\,dx,\\
\#J_{b_\eps} \leq &\frac{8M}{\delta},\\
\int_{I\setminus \omega_\eps^{x'}} |b_\eps(x_1) - a_\eps(x_1)|^p \,dx_1 \leq & \frac{C(p,S,I)M}{\delta}\eps,\\
\Lm^1(\omega_\eps^{x'}) \leq & \frac{C(p,S,I)M}{\delta}\eps.
    \end{aligned}
\end{equation}

Similarly, we can find $y'\in A_\eps^{8/\delta} \cap B_\eps^{1+\delta} \cap C_\eps^{8/\delta} \cap D_\eps^{8/\delta}$ and define $d_\eps(x_1) := u_\eps(x_1,y')\in SBV^p_\loc(I)$, such that
\begin{equation}
    \begin{aligned}
        \int_I |d_\eps'(x_1)|^p\,dx_1 \leq &\frac{C(M,S)}{\delta},\\
\#J_{d_\eps} \leq &\frac{1+\delta}{\Lm^2(S)} \int_{J_{u_\eps}} |\nu_1|\,d\Hm^2,\\
\int_{I\setminus \omega_\eps^{y'}} |d_\eps(x_1) - a_\eps(x_1)|^p \,dx_1 \leq & \frac{C(p,S,I)M}{\delta}\eps,\\
\Lm^1(\omega_\eps^{y'}) \leq & \frac{C(p,S,I)M}{\delta}\eps.
    \end{aligned}
\end{equation}

We note that by our choice of subsequence we have for $\delta\leq \delta_0(N,S)$ and $\eps \leq \eps_0(N,S)$ that $\#J_{d_\eps} \leq N$ uniformly since $\#J_{d_\eps}$ is an integer that is strictly less than $N+1$.
If $\#J_{b_{\eps}} \leq \# J_{d_{\eps}}$ then set $w_{\eps} = b_{\eps}$. It follows imediately that $F_{\eps}(w_{\eps}) \leq (1+\delta) F_{\eps}(u_{\eps})$ and by the estimates \eqref{eq: measure omegaeps}, \eqref{eq: diff ueps aeps} and the definition of $x'$ $w_{\eps} - u_{\eps} \to 0$ in measure. 
In addition, we may assume that $\liminf_{\eps \to 0} \int_I |d_{\eps}'|^p \, dx_1 >0$. Otherwise we may assume up to a further (not relabeled) subsequence that $\lim_{\eps \to 0} \int_I |d_{\eps}'|^p \, dx_1 = 0$. In this scenario, set $w_{\eps} = d_{\eps}$. Then we obtain from \eqref{eq: measure omegaeps}, \eqref{eq: diff ueps aeps} and the definition of $y'$ that $u_{\eps} - w_{\eps} \to 0$ in measure and $\liminf_{\eps \to 0}F_{\eps}(w_{\eps}) \leq (1+\delta) \liminf_{\eps \to 0} F_{\eps}(u_{\eps})$. 

Consequently, from now on, we assume that for $\varepsilon > 0$ small enough it holds $\int_I |d_{\eps}'|^p \, dx_1 >c > 0$ and $\#J_{d_{\eps}} \leq \# J_{b_{\eps}}$. Then apply Lemma \ref{lemma: jump removal} to the pair of functions $b_\eps$, $d_\eps$, with exceptional set $A_\eps := \omega_\eps^{x'} \cup \omega_{\eps}^{y'}$, yielding a new exceptional set $\tilde A_\eps \subset I$ with
\begin{equation}
\Lm^1(\tilde A_\eps) \leq C(M,p,S,c) \frac{\eps^{1/p}}{\delta}
\end{equation}
and a function $w_\eps\in \SBV^p(I)$ such that
\begin{equation}\label{eq: w eps properties}
\begin{aligned}
\int_I |w_\eps'(x_1)|^p\,dx_1 \leq & \frac{1+\delta}{\Lm^2(S)} \int_{I\times S} |\partial_1 u_\eps(x)|^p\,dx,\\
\#J_{w_\eps} \leq& \frac{1+\delta}{\Lm^2(S)} \int_{J_{u_\eps}} |\nu_1|\,d\Hm^2 \leq N,\\
\|w_\eps(x_1) - b_\eps(x_1)\|_{L^\infty(I\setminus \tilde A_{\eps})} \leq & \frac{C(M,p,S)}{\delta} \eps^{1/(pp')}.
\end{aligned}
\end{equation}

The corresponding piecewise constant function $c_{\varepsilon}: I \to \R$ is then simply given as the average of $w_{\varepsilon}$ in between two jumps of $w_{\varepsilon}$.
In particular, by the Poincar\'e inequality, we obtain $L^p$-boundedness of $w_{\varepsilon}-c_{\eps}$.
Hence, there exists $w\in L^p(I)$ such that $w_\eps - c_\eps \rightharpoonup w$ in $L^p$.
Even more, we obtain by the $GSBV^p$ compactness statement that $w \in SBV^p$ and $F_0(w) \leq \liminf_{\eps \to 0} F_0(w_{\eps}) = \liminf_{\eps \to 0} F_{\eps}(w_{\eps}-c_{\eps})$.
Moreover, by \eqref{eq: w eps properties} we have
\begin{equation}\label{eq: est energy}
F_{\varepsilon}(w_{\varepsilon}-c_{\eps}) \leq (1+\delta) F_{\eps}(u_{\eps}).
\end{equation}

Combining \eqref{eq: measure omegaeps}, \eqref{eq: diff ueps aeps} and \eqref{eq: w eps properties} implies for every $\delta_0(N,S) >\delta > 0$ that $w_{\eps} - u_{\eps} \to 0$ in measure and hence $u_\eps - c_\eps \to w$ in measure.
%we also have for every $R>0$
%\begin{equation}\label{eq: u-w}
%\lim_{\eps \to 0} \int_{((I\cap (-R,R)) \times S) \setminus (\omega_\eps \cup (\tilde A_\eps \times S))} |u_\eps(x) - w_\eps(x_1)|^p\,dx = 0.
%\end{equation}
%This shows that for every $\delta\in (0,1)$, $u_\eps - w_\eps \to 0$ locally in measure. 
By a diagonal argument, we may ignore the multiplicative error term $1+\delta$ in \eqref{eq: w eps properties} and \eqref{eq: est energy} asymptotically without losing the convergence in measure.

\end{proof}

\bibliographystyle{plain}
\bibliography{poincare_v2}

\end{document}